\documentclass[a4paper,11pt]{article}

\usepackage[T1]{fontenc}
\usepackage[utf8]{inputenc} 
\usepackage[english]{babel}
\usepackage{microtype}

\usepackage{amsmath,amsthm,amsfonts,amssymb,mathrsfs}
\usepackage{mathtools}

\usepackage{graphicx}
\usepackage[a4paper,margin=3cm]{geometry}

\usepackage{authblk}

\usepackage[hidelinks]{hyperref}

\usepackage{xparse}
\usepackage{color}

\newcommand{\dd}{\mathrm{d}}

\theoremstyle{plain}
\newtheorem{theorem}{Theorem}[section]
\newtheorem{lemma}[theorem]{Lemma}

\theoremstyle{definition}

\theoremstyle{remark}
\newtheorem{remark}[theorem]{Remark}

\numberwithin{equation}{section}

\hypersetup{
  pdftitle={Redshift Suppression of Nonlinear Scalar Fields in Accelerating FLRW Spacetimes},
  pdfauthor={Mirda Prisma Wijayanto and coauthors},
  pdfkeywords={FLRW, nonlinear scalar field, redshift suppression, weighted energy},
}

\begin{document}
\allowdisplaybreaks
\pagestyle{plain}

\title{Redshift Suppression of Nonlinear Scalar Fields in Accelerating FLRW Spacetimes}

\author[1]{Mirda Prisma Wijayanto\thanks{Corresponding author: \href{mailto:mirda.wijayanto@unsoed.ac.id}{mirda.wijayanto@unsoed.ac.id}}}
\author[2]{Emir Syahreza Fadhilla}
\author[2]{Ryan Sugihakim}
\author[3]{Iwan Setiawan}
\author[2]{Fiki Taufik Akbar}
\author[2]{Bobby Eka Gunara}

\affil[1]{Department of Physics, Faculty of Mathematics and Natural Sciences, Universitas Jenderal Soedirman, Jl.\ Dr.\ Soeparno 61, Purwokerto 53122, Indonesia}
\affil[2]{Theoretical High Energy Physics Research Division, Faculty of Mathematics and Natural Sciences, Institut Teknologi Bandung, Jl.\ Ganesha No.\ 10, Bandung 40132, Indonesia}
\affil[3]{Department of Physics Education, University of Bengkulu, Kandang Limun, Bengkulu 38371, Indonesia}

\date{} 
\maketitle

\begin{abstract}
We study small--data solutions of a nonlinear scalar field equation on spatially flat $d$--dimensional FLRW spacetimes ($d\ge4$). In conformal time $\tau$ the field satisfies a damped semilinear wave/Klein--Gordon equation with time--dependent coefficients determined by the scale factor $a(\tau)$ and the conformal Hubble rate $H(\tau)=\dot a/a$. We focus on accelerated conformal expansion of the form $H(\tau)=H_0(1+\tau)^{-\alpha}$ with $H_0>0$ and $0\le\alpha<1$, for which $a(\tau)$ grows stretched--exponentially, and we assume a power potential $V(\varphi)=-\frac{\varepsilon}{m+1}|\varphi|^{m+1}$. For global solutions arising from sufficiently small, spatially localized initial data, we introduce the conformal rescaling $\phi=a^{(d-2)/2}\varphi$, which removes the first--order Hubble damping and exposes the interaction as a \emph{time--dependent coupling}. In the rescaled equation the nonlinearity is weighted by $g(\tau)=a(\tau)^{\sigma}$ with
\[
\sigma=\frac{d+2-(d-2)m}{2},
\]
so the conformal power $m_{\mathrm{conf}}=\frac{d+2}{d-2}$ is the sharp threshold for redshift suppression: $g$ decays for $m>m_{\mathrm{conf}}$, is constant for $m=m_{\mathrm{conf}}$ (classical conformal invariance), and grows for $1<m<m_{\mathrm{conf}}$. For accelerated conformal expansion $H(\tau)=H_0(1+\tau)^{-\alpha}$ with $0\le\alpha<1$ and for superconformal interactions $m>m_{\mathrm{conf}}$, we prove that $g\in L^1([0,\infty))$ and deduce small--data global existence together with scattering/asymptotic linearization for $\phi$. As a complementary result in the diffusion--dominated regime $1<m<1+\frac{2}{d-1}$, we adapt a weighted energy method for variable damping to deduce explicit $L^2$ and $L^1$ decay rates of the form
\[
\|\varphi(\tau,\cdot)\|_{L^2}\lesssim a(\tau)^{-2/(m-1)}(1+\tau)^{-(1+\alpha)(\frac{1}{m-1}-\frac{d-1}{4})}
\]
\[
\|\varphi(\tau,\cdot)\|_{L^1}\lesssim a(\tau)^{-2/(m-1)}(1+\tau)^{-(1+\alpha)(\frac{1}{m-1}-\frac{d-1}{2})}.
\]
These bounds provide a quantitative PDE formulation of redshift--induced suppression of nonlinear scalar self--interactions at late conformal times.
\end{abstract}

\noindent\textbf{Keywords:} accelerating FLRW spacetimes; damped wave equations; diffusion phenomena; weighted energy estimates; redshift suppression; nonlinear scalar fields.\\
\noindent\textbf{Mathematics Subject Classification (2020):} 35L71; 35B40; 83C15.

\section{Introduction}
The nonlinear scalar field in the accelerating FLRW background is a fundamental component of modern cosmology because it simultaneously mediates the dynamics of early inflation, late cosmic acceleration, and the spectrum of primordial fluctuations observed in CMB anisotropies and large-scale structures \cite{Guth,Mukhanov,Frusciante}. In the effective quantum field framework in curved spacetime, the nonlinear self-interaction of the scalar field is not a small correction but rather carries information about vacuum stability, structure formation, and the survival of bound states under cosmological redshift \cite{Weinberg}.

Phenomenologically, the nonlinear potential determines whether the inflaton or dark-energy field decays toward an effective free field state or retains intrinsic dynamics that can leave measurable traces in the power spectrum and non-Gaussianity \cite{Bartolo,Chen}. From a mathematical perspective, accelerated expansion produces Hubble damping and amplitude dilution that compete directly with nonlinear amplification, so the question of whether and how nonlinearity persists over long timescales becomes a sharp PDE realization of the cosmic no–hair principle \cite{Wald,Ringstrom}. Therefore, studying nonlinear scalar fields in rapidly expanding FLRW is not only a matter of modeling inflation but also provides a rigorous framework for understanding how cosmic geometry controls, or even suppresses, microscopic nonlinear dynamics on cosmological scales.

Over the past two decades, the dynamics of scalar fields in an expanding FLRW background have been systematically analyzed through the theory of damped wave equations and hyperbolic PDEs with time-dependent coefficients \cite{Todorova,Nishihara}. In particular, the work of Todorova-Yordanov \cite{Todorova} identified the critical exponent and decay rate for damped wave equations with power nonlinearities, which later became the mathematical foundation for understanding diffusive phenomena in cosmological wave equations. Further developments by Nishihara and Zhai \cite{Nishihara} show that for slowly decaying time-dependent damping, such as that naturally arising from the FLRW expansion, the solution enters an asymptotic regime resembling the heat equation (diffusion phenomenon). Parallel to this, in relativistic cosmology, linear scalar and metric perturbations on accelerated de Sitter and FLRW backgrounds have been shown to be dynamically suppressed and driven toward a homogeneous de Sitter-like state, a manifestation of the cosmic no-hair theorem for linearized fluctuations \cite{Kitada,Bruni,Kaloper}. These results demonstrate that inflationary expansion erases anisotropies and inhomogeneities at the level of test fields and perturbations.

Throughout this paper, however, the expansion rate is measured in conformal time. The resulting dynamics should therefore be interpreted as an effective damping geometry induced by accelerated FLRW expansion, rather than as physical de Sitter inflation in cosmic time. While semilinear wave or Klein-Gordon dynamics on (asymptotically) de Sitter backgrounds have been analyzed via Strichartz and microlocal methods \cite{Hintz,Baskin,Nakamura,Yagdjian,Palmieri}, related asymptotic linearization results were obtained in these works, but without isolating the decay of the nonlinear coupling itself. 
By contrast, after the conformal rescaling $\phi=a^{(d-2)/2}\varphi$ the interaction becomes a \emph{time--dependent coupling} $g(\tau)=a(\tau)^{\sigma}$ with $\sigma=\frac{d+2-(d-2)m}{2}$. Thus the conformal power $m_{\mathrm{conf}}=\frac{d+2}{d-2}$ is the sharp threshold for redshift suppression: $g$ decays for $m>m_{\mathrm{conf}}$, is constant for $m=m_{\mathrm{conf}}$, and grows for $1<m<m_{\mathrm{conf}}$. In the superconformal regime $m>m_{\mathrm{conf}}$ we prove that $g\in L^1([0,\infty))$ for $0\le\alpha<1$ and deduce small--data global existence and scattering/asymptotic linearization for $\phi$; as a complementary result, we derive weighted $L^1$ and $L^2$ decay estimates for $\varphi$ in a diffusion--dominated range of exponents. These approaches provide a rigorous formulation of redshift-induced decay estimates for nonlinear scalar field interactions, demonstrating that accelerated expansion dynamically weakens nonlinear self-interactions and suggests the suppression of localized nonlinear structures in the small-data regime in inflationary FLRW spacetimes. This provides a nonlinear PDE analog of the cosmic no-hair mechanism at the level of scalar self-interactions.

The remainder of the paper is organized as follows. In Section \ref{Sec2}, we introduce the nonlinear scalar field model on higher-dimensional spatially flat FLRW spacetimes and derive the corresponding evolution equations. In Section \ref{Sec3}, we establish the main analytical results of the paper. We first introduce the conformal rescaling and identify the effective time-dependent nonlinear coupling generated by cosmological redshift. We then prove small-data global existence and scattering/asymptotic linearization in the superconformal regime. As a complementary result, we derive weighted $L^1$- and $L^2$-decay estimates in the diffusion-dominated range by means of a weighted energy method adapted to variable damping. We also include a brief discussion of the intermediate exponent range $1+\frac{2}{d-1}<m<m_{\mathrm{conf}}$,
which is not covered by either the superconformal scattering argument or the diffusion-dominated weighted-energy method. The final Section \ref{Sec4} is devoted to the implications of these results for inflationary stability and nonlinear field dynamics in expanding universes.

\section{Nonlinear Scalar Fields on FLRW Spacetimes}\label{Sec2}
We consider a $d$–dimensional spatially flat Friedmann-Lemaître-Robertson-Walker (FLRW) spacetime 
$\mathcal{M}^d$, $d\ge 4$, endowed with global coordinates $x^\mu=(t,x^i)$, $\mu=0,1,\dots,d-1$, 
and a Lorentzian metric of signature $(-,+,\dots,+)$
\begin{align}\label{metric1}
ds^2=-dt^2+a^2(t)\sum_{i=1}^{d-1}dx_i^2,
\end{align}
where $x^i$ are Cartesian coordinates on $\mathbb{R}^{d-1}$, and $a(t)$ denotes the cosmological scale factor.

Introducing the conformal time $\tau$,
\begin{align}\label{conformal}
\frac{d\tau}{dt}=\frac{1}{a(t)},
\end{align}
the metric takes the conformally flat form
\begin{align}\label{metric2}
ds^2=a^2(\tau)\big(-d\tau^2+dx^2\big),
\end{align}
so that $\mathcal{M}^d$ is conformal to Minkowski space $\mathbb{R}\times\mathbb{R}^{d-1}$.
We assume $a\in C^2((0,\infty))$ and define the Hubble parameter
\begin{align}
H(\tau)=\frac{\dot a(\tau)}{a(\tau)},\qquad \dot a=\frac{da}{d\tau}.
\end{align}
The Ricci tensor and scalar curvature associated with \eqref{metric2} are given by
\begin{align}
R_{\mu\nu} =& \dot{H}\left(\eta_{\mu\nu}-(d-2)\delta^0_\mu\delta^0_\nu\right) + H^2(d-2)(\eta_{\mu\nu}+\delta^0_\mu\delta^0_\nu), \label{Ricci tensor}\\
R =& (d-1)a^{-2}\left(2\dot{H}+(d-2)H^2\right) \label{Curvature scalar}.
\end{align}

We study a real scalar field $\varphi$ propagating on this expanding background with a non-minimal coupling to curvature. Its dynamics are governed by the action
\begin{align}\label{action}
S=\int d\tau\,dx\,\sqrt{-g}\left(\frac12\,\partial_\mu\varphi\,\partial^\mu\varphi
+\frac{\xi}{2}R\varphi^2-V(\varphi)\right),
\end{align}
where $\xi>0$ is the non–minimal coupling constant and $V\in C^\infty(\mathbb{R})$ satisfies
$V(0)=V'(0)=0$.
The corresponding Euler-Lagrange equation reads
\begin{align}\label{eom}
\nabla_\mu\nabla^\mu\varphi-\xi R\varphi+V'(\varphi)=0.
\end{align}
Substituting \eqref{Ricci tensor}-\eqref{Curvature scalar} into \eqref{eom}, we obtain the nonlinear damped wave equation
\begin{align}\label{nlwave}
(\partial_\tau^2-\Delta)\varphi
=F(\varphi,\partial_\tau\varphi),
\end{align}
where
\begin{align}
F(\varphi,\partial_\tau\varphi) \equiv -(d-2)H\partial_\tau\varphi - \xi(d-1)(2\dot{H}+(d-2)H^2)\varphi + a^2(\tau)\partial_\varphi V(\varphi).
\end{align}

Let $f\in H^{l+1}(\mathbb{R}^{d-1})$ and $g\in H^l(\mathbb{R}^{d-1})$ be compactly supported and let $l>\frac{1}{2}(d-1)$. The Cauchy problem reads \begin{equation}
\label{Eq.1}
\begin{cases}
\partial_\tau^2\varphi - \Delta\varphi + (d-2)H\partial_\tau\varphi + \xi(d-1)(2\dot{H}+(d-2)H^2)\varphi - a^2\partial_\varphi V(\varphi) = 0,\\ \varphi(\tau_0,x)=f(x),\qquad \partial_\tau\varphi(\tau_0,x)=g(x). \end{cases}
\end{equation}
It was proved in \cite{fiki} that for sufficiently small initial data in $H^{l+1}(\mathbb{R}^{d-1})\times H^l(\mathbb{R}^{d-1})$, the Cauchy problem \eqref{Eq.1} admits a unique global classical solution
\begin{align} \varphi\in C\big([T_0,\infty),H^{l+1}\big)\cap C^1\big([T_0,\infty),H^l\big).
\end{align}

The purpose of the present work is to rigorously quantify how cosmological expansion suppresses nonlinear scalar self-interactions through cosmological redshift. To this end, we assume that the expansion rate is of the form
\begin{align}\label{Halpha}
H(\tau)=H_0(1+\tau)^{-\alpha},\qquad H_0>0,\quad 0\leq \alpha<1,
\end{align}
and that the scalar field is governed by a power-law potential
\begin{align}\label{V}
V(\varphi)=-\frac{\varepsilon}{m+1}|\varphi|^{m+1},\qquad \varepsilon>0,\quad m>1.
\end{align}
\begin{remark}
When $m$ is not an odd integer, the map $\varphi\mapsto |\varphi|^{m+1}$ is not $C^\infty$ at $\varphi=0$.
In the PDE arguments below we only require the nonlinearity $\varphi\mapsto |\varphi|^{m-1}\varphi$ to be locally Lipschitz in the Sobolev spaces under consideration.
In the scattering result (Theorem~\ref{Thm:scattering}) we assume $m$ to be an odd integer to remain within the smooth conformal field setting.
\end{remark}
The restriction $0\leq \alpha<1$ selects the physically relevant class of accelerating FLRW cosmologies. Indeed, since $H(\tau)=\dot a(\tau)/a(\tau)$, the ansatz above yields
\begin{align}\label{a(tau)}
a(\tau)=a_0\exp\left(\frac{H_0}{1-\alpha}\big((1+\tau)^{1-\alpha}-1\big)\right),
\qquad \alpha\neq1,
\end{align}
where $a_0>0$ denotes the initial value of the scale factor. 
The resulting scale factor exhibits stretched–exponential growth in conformal time for $0\le \alpha<1$, reflecting a rapidly expanding conformal geometry. It is important to note that $H(\tau)=\dot a(\tau)/a(\tau)$ is the \emph{conformal} Hubble rate, whereas the physical Hubble parameter in cosmic time $t$ is $H_{\mathrm{phys}}(t)=\frac{1}{a(t)}\frac{\dd a(t)}{\dd t}$; these are related by $H_{\mathrm{phys}}(t)=H(\tau)/a(\tau)$. Consequently, the case $\alpha=0$ describes a spacetime with strong conformal redshift and effective damping. Such fast conformal expansion is nevertheless sufficient to generate a pronounced cosmological redshift that drives the spacetime toward isotropy and homogeneity at late times, in the sense of the cosmic no-hair mechanism \cite{Kitada,Kaloper}.
For $\alpha<0$, the conformal expansion becomes super–accelerated and typically leads to pathologies, while for $\alpha\ge1$, the growth of $a(\tau)$ is only power-like or slower, so that redshift effects are too weak to control nonlinear interactions. We therefore focus on $0\le \alpha<1$ as the natural regime in which accelerated conformal expansion yields a systematic decay of nonlinear scalar self-interactions.

Under the assumptions \eqref{Halpha}, \eqref{V}, and \eqref{a(tau)}, \eqref{Eq.1} becomes
\begin{equation}\label{Eq.2}
\begin{cases}
\partial_\tau^2\varphi-\Delta\varphi+(d-2)H\partial_\tau\varphi
+\xi(d-1)(2\dot H+(d-2)H^2)\varphi
+a^2\varepsilon|\varphi|^{m-1}\varphi=0,\\
\varphi(\tau_0,x)=f(x),\qquad \partial_\tau\varphi(\tau_0,x)=g(x).
\end{cases}
\end{equation}
This will be the starting point of our analysis in the next section.

\section{Nonlinear Decay in Accelerating FLRW Spacetimes}\label{Sec3}
We investigate the long--time behavior of solutions to the scalar field equation \eqref{Eq.2} on an accelerating FLRW background. Our main point is that accelerated expansion affects nonlinear dynamics through two complementary mechanisms: (i)~\emph{redshift renormalization} of the interaction strength, made explicit by a conformal rescaling that turns the nonlinearity into a time--dependent coupling, and (ii)~\emph{diffusion phenomena} generated by the (non--integrable) Hubble damping. In the superconformal regime (powers above the conformal threshold) the effective coupling becomes integrable in conformal time, which yields small--data global existence and scattering/asymptotic linearization for the rescaled field. In addition, in the diffusion--dominated regime we obtain quantitative $L^{1}$ and $L^{2}$ decay rates via a weighted energy method adapted to variable damping.

\subsection{Conformal rescaling and the effective coupling}\label{Sec3Rescaling}
A key structural feature of \eqref{Eq.2} is that the Hubble damping can be removed by a conformal rescaling.
Define
\begin{equation}\label{Eq:phiDef}
\phi(\tau,x):=a(\tau)^{\frac{d-2}{2}}\varphi(\tau,x),
\qquad 
\xi_{\mathrm{c}}:=\frac{d-2}{4(d-1)}.
\end{equation}
A straightforward computation (see, e.g., the discussion around conformal coupling in \cite{Baskin,Hintz}) yields the rescaled equation
\begin{equation}\label{Eq:rescaled}
\partial_\tau^2\phi-\Delta\phi
+(d-1)(\xi-\xi_{\mathrm{c}})\big(2\dot H+(d-2)H^2\big)\phi
+\varepsilon\,a(\tau)^{\sigma}|\phi|^{m-1}\phi=0,
\end{equation}
where the \emph{redshift exponent} is
\begin{equation}\label{Eq:sigma}
\sigma:=\frac{d+2-(d-2)m}{2}.
\end{equation}
Thus the nonlinear interaction is governed by the \emph{time--dependent coupling} $g(\tau)=a(\tau)^{\sigma}$.
The \emph{conformal power}
\begin{equation}\label{Eq:mconf}
m_{\mathrm{conf}}:=\frac{d+2}{d-2}
\end{equation}
is characterized by $\sigma=0$ (classical conformal invariance of the massless, conformally coupled theory).

\begin{lemma}[Integrability of the redshift coupling]\label{Lemma:coupling_integrable}
Assume \eqref{Halpha} (equivalently \eqref{a(tau)}) with $0\le\alpha<1$.
If $\sigma<0$ (equivalently $m>m_{\mathrm{conf}}$), then $g\in L^1([0,\infty))$ and
\begin{equation}\label{Eq:coupling_tail}
\int_\tau^\infty a(s)^{\sigma}\,ds \;\lesssim\; (1+\tau)^{\alpha}\,a(\tau)^{\sigma},
\qquad \tau\ge0.
\end{equation}
\end{lemma}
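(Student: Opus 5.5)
The plan is to exploit the explicit form \eqref{a(tau)} of the scale factor. It makes $a(s)^{\sigma}$ an exponential of a stretched power of $1+s$, and the tail integral then behaves like a Laplace-type integral whose value is controlled by the logarithmic derivative of the integrand. Write $\beta:=-\sigma>0$ and
\begin{equation*}
a(s)^{\sigma}=a_0^{\sigma}e^{\sigma H_0/(1-\alpha)}\exp\Big(-\tfrac{\beta H_0}{1-\alpha}(1+s)^{1-\alpha}\Big)=:C\,e^{-\psi(s)} .
\end{equation*}
Here $\psi'(s)=\beta H(s)=\beta H_0(1+s)^{-\alpha}>0$, so $\psi$ is increasing. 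Since $1-\alpha>0$, we also have $\psi(s)\to\infty$ like $(1+s)^{1-\alpha}$.

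The key step is an exact differentiation identity. We have
\begin{equation*}
\frac{d}{ds}\Big(\frac{(1+s)^{\alpha}}{\beta H_0}\,a(s)^{\sigma}\Big)=-a(s)^{\sigma}+\frac{\alpha(1+s)^{\alpha-1}}{\beta H_0}\,a(s)^{\sigma},
\end{equation*}
which rearranges to
\begin{equation*}
a(s)^{\sigma}\Big(1-\frac{\alpha}{\beta H_0}(1+s)^{\alpha-1}\Big)=-\frac{d}{ds}\Big(\frac{(1+s)^{\alpha}}{\beta H_0}a(s)^{\sigma}\Big).
\end{equation*}
Because $\alpha<1$, the factor $(1+s)^{\alpha-1}$ tends to $0$. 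Fix $T_*\ge 0$ such that $\frac{\alpha}{\beta H_0}(1+s)^{\alpha-1}\le\frac12$ for all $s\ge T_*$; when $\alpha=0$ we may take $T_*=0$. For $\tau\ge T_*$ we integrate the identity from $\tau$ to $R$ and let $R\to\infty$. The boundary term $(1+R)^{\alpha}a(R)^{\sigma}$ vanishes, because stretched-exponential decay beats polynomial growth. This gives
\begin{equation*}
\tfrac12\int_\tau^\infty a(s)^{\sigma}\,ds\le\frac{(1+\tau)^{\alpha}}{\beta H_0}a(\tau)^{\sigma},
\end{equation*}
which is \eqref{Eq:coupling_tail} on $[T_*,\infty)$ with constant $2/(\beta H_0)$. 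In particular the integral is finite, and $g\in L^1([0,\infty))$ follows once we add the integral over the compact interval $[0,T_*]$, where $g$ is continuous.

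For the remaining range $0\le\tau<T_*$, we split the integral at $T_*$. The piece over $[\tau,T_*]$ is at most $T_*\sup_{[0,T_*]}a^{\sigma}$. The tail over $[T_*,\infty)$ is bounded by the previous step. On the compact interval $[0,T_*]$ the right-hand side $(1+\tau)^{\alpha}a(\tau)^{\sigma}$ is bounded below by a positive constant, since $a>0$ is continuous. So the bounded left-hand side is dominated by a constant multiple of it, and the implicit constant depends only on $d,m,\alpha,H_0,a_0$.

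The only real obstacle is the correction term $\alpha(1+s)^{\alpha-1}$ coming from differentiating the weight $(1+s)^{\alpha}$. The argument needs it to be eventually small compared with $\beta H_0$. This is exactly where $\alpha<1$ is used: for $\alpha=1$ the correction term no longer decays, $a$ is only a power of $1+\tau$, and integrability would require $\beta H_0>1$. The split at $T_*$ handles the correction term cleanly without any assumption on the size of $\beta H_0$.
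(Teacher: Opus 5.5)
Your proof is correct, and it takes a different route from the paper's. The paper substitutes $u=(1+s)^{1-\alpha}$, which turns the tail into an incomplete Gamma integral $\int_{u_0}^\infty u^{p}e^{-cu}\,du$ with $p=\frac{\alpha}{1-\alpha}$, $c=-\kappa\sigma$ and $u_0=(1+\tau)^{1-\alpha}$. It then quotes the standard bound $\int_{u_0}^\infty u^pe^{-cu}\,du\lesssim u_0^pe^{-cu_0}$ for $u_0\ge1$, and the weight $(1+\tau)^\alpha$ appears as $u_0^p$.

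You stay in the variable $s$ and write $a^\sigma$ as an exact derivative, up to the correction $\frac{\alpha}{\beta H_0}(1+s)^{\alpha-1}$. Under $u=(1+s)^{1-\alpha}$ this correction is exactly $\frac{p}{cu}$. So your identity is the one-step integration by parts that proves the Gamma-function bound the paper cites. This makes your argument self-contained, gives the explicit constant $2/(\beta H_0)$ on $[T_*,\infty)$, and shows precisely where $\alpha<1$ is used. You also do not need the boundary term at $R$ to vanish; $F(R)\ge 0$ is enough. Your compactness step for $\tau<T_*$ supplies what the paper leaves implicit when it asserts the Gamma bound uniformly down to $u_0=1$: integration by parts alone only closes for $u_0>p/c$.

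The paper's version is shorter but tied to the explicit formula for $a$. Yours generalizes at once. Since $\frac{d}{ds}a^\sigma=\sigma H a^\sigma$, your identity reads $\frac{d}{ds}\bigl(a^\sigma/(\beta H)\bigr)=-a^\sigma\bigl(1+\dot H/(\beta H^2)\bigr)$. The same argument therefore gives $\int_\tau^\infty a^\sigma\,ds\lesssim a(\tau)^\sigma/H(\tau)$ for any positive $C^1$ rate $H$ with $\dot H/H^2\to 0$, not just the power law $H_0(1+\tau)^{-\alpha}$.
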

\begin{proof}
Using \eqref{a(tau)}, we can write $a(\tau)^{\sigma}=a_0^{\sigma}\exp\!\big(\kappa\sigma\big((1+\tau)^{1-\alpha}-1\big)\big)$ with $\kappa:=\frac{H_0}{1-\alpha}>0$.
Let $u=(1+s)^{1-\alpha}$ so that $ds=\frac{1}{1-\alpha}u^{\frac{\alpha}{1-\alpha}}\,du$.
Then for $\tau\ge0$,
\[
\int_\tau^\infty a(s)^{\sigma}\,ds
=\frac{a_0^{\sigma}e^{-\kappa\sigma}}{1-\alpha}
\int_{(1+\tau)^{1-\alpha}}^\infty u^{\frac{\alpha}{1-\alpha}}e^{\kappa\sigma u}\,du.
\]
Since $\sigma<0$, the integral converges and standard estimates for incomplete Gamma functions imply
\[
\int_{u_0}^\infty u^{p}e^{-c u}\,du \;\lesssim\; u_0^{p}e^{-c u_0}\quad (u_0\ge1),
\]
with $c=-\kappa\sigma>0$ and $p=\frac{\alpha}{1-\alpha}$.
Translating back to $\tau$ yields \eqref{Eq:coupling_tail}, and in particular $a(\cdot)^{\sigma}\in L^1([0,\infty))$.
\end{proof}

\subsection{Superconformal regime: global existence and scattering}\label{Sec3Scattering}
We next exploit Lemma~\ref{Lemma:coupling_integrable} in the \emph{superconformal} range $m>m_{\mathrm{conf}}$, where the interaction becomes short--range in conformal time.
For clarity we impose the conformal coupling $\xi=\xi_{\mathrm{c}}$, so that the linear part of \eqref{Eq:rescaled} reduces to the flat wave operator.
(For $\xi\neq\xi_{\mathrm{c}}$, one obtains scattering relative to the linear equation with the time--dependent potential in \eqref{Eq:rescaled}; see Remark~\ref{Rem:nonconformal_scattering} below.)

\begin{theorem}[Short--range nonlinearity and scattering]\label{Thm:scattering}
Let $d\ge4$, $0\le\alpha<1$, and assume $\xi=\xi_{\mathrm{c}}$.
Assume that $m>m_{\mathrm{conf}}=\frac{d+2}{d-2}$ and that $m$ is an odd integer so that $u\mapsto |u|^{m-1}u$ is smooth.
Let $l>\frac12(d-1)$ and consider initial data $(f,g)\in H^{l+1}(\mathbb{R}^{d-1})\times H^{l}(\mathbb{R}^{d-1})$ at $\tau_0=0$.
Let $(\phi_0,\phi_1)$ be the corresponding rescaled data
\begin{equation}\label{Eq:rescaledData}
\phi_0=a(0)^{\frac{d-2}{2}}f,\qquad
\phi_1=a(0)^{\frac{d-2}{2}}g+\frac{d-2}{2}H(0)a(0)^{\frac{d-2}{2}}f.
\end{equation}
There exists $\delta>0$ such that if $\|(\phi_0,\phi_1)\|_{H^{l+1}\times H^{l}}\le\delta$, then the Cauchy problem \eqref{Eq:rescaled} admits a unique global solution $\phi$ on $[0,\infty)$ with
\begin{equation}\label{Eq:uniformSmall}
\sup_{\tau\ge0}\|(\phi(\tau),\partial_\tau\phi(\tau))\|_{H^{l+1}\times H^{l}}\;\lesssim\;\delta.
\end{equation}
Moreover, there exist unique scattering states $(\phi_+,\phi_+')\in H^{l+1}\times H^{l}$ such that
\begin{equation}\label{Eq:scattering}
\|(\phi(\tau),\partial_\tau\phi(\tau)) - S(\tau)(\phi_+,\phi_+')\|_{H^1\times L^2}
\;\lesssim\; \delta^{m}\int_\tau^\infty a(s)^{\sigma}\,ds
\;\xrightarrow[\tau\to\infty]{}\;0,
\end{equation}
where $S(\tau)$ denotes the free wave group on $\mathbb{R}^{d-1}$.
Consequently, the original field satisfies the asymptotically linear decay
\begin{equation}\label{Eq:L2LinearDecay}
\|\varphi(\tau,\cdot)\|_{L^2(\mathbb{R}^{d-1})}
= a(\tau)^{-\frac{d-2}{2}}\|\phi(\tau,\cdot)\|_{L^2(\mathbb{R}^{d-1})}
\;\lesssim\; \delta\,a(\tau)^{-\frac{d-2}{2}},\qquad \tau\ge0.
\end{equation}
\end{theorem}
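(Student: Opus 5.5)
With $\xi=\xi_{\mathrm c}$, equation \eqref{Eq:rescaled} reads $\Box\phi=-\varepsilon g(\tau)|\phi|^{m-1}\phi$ with $g=a^\sigma$. The plan is a Duhamel contraction argument in the energy space $X=C([0,\infty);H^{l+1})\cap C^1([0,\infty);H^l)$. The only decay used comes from the integrability of $g$ given by Lemma~\ref{Lemma:coupling_integrable}; no dispersive estimates for the wave group are needed. We write the solution as
\[
(\phi,\partial_\tau\phi)(\tau)=S(\tau)(\phi_0,\phi_1)-\varepsilon\int_0^\tau S(\tau-s)\big(0,\,g(s)|\phi|^{m-1}\phi(s)\big)\,ds ,
\]
and use that $S(\tau)$ is uniformly bounded on $H^{l+1}\times H^l$ (indeed on $\dot H^{k+1}\cap H^{k+1}\times H^k$ for every $k$, via the Fourier multipliers $\cos(\tau|\xi|)$ and $\sin(\tau|\xi|)/|\xi|$). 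One caveat: the low-frequency part of $\sin(\tau|\xi|)/|\xi|$ grows linearly in $\tau$ in $L^2$. I would handle this either by working with the norm $\|\nabla\phi\|_{H^l}+\|\partial_\tau\phi\|_{H^l}$ and recovering $\|\phi\|_{L^2}$ through $\|\phi(\tau)\|_{L^2}\le\|\phi_0\|_{L^2}+\int_0^\tau\|\partial_\tau\phi\|_{L^2}$, or, more simply, by noting that the $L^2$ growth of the inhomogeneous term is controlled by $\int_0^\tau (\tau-s)g(s)\,ds$. I expect this to be the \textbf{main obstacle}: the statement asserts a uniform bound in $H^{l+1}$ including the $L^2$ part. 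The free flow itself does not keep $\|\phi\|_{L^2}$ bounded for general $H^1\times L^2$ data in low dimension. In $d-1\ge3$ spatial dimensions, however, Hardy's inequality (or $\phi_1\in L^{2}$ together with Sobolev decay) can bound $\|\sin(\tau|\nabla|)|\nabla|^{-1}\phi_1\|_{L^2}$ only if $\phi_1\in \dot H^{-1}$. I would first try to close everything in the homogeneous energy norm plus $L^\infty$. The $L^2$ part of \eqref{Eq:uniformSmall} I would either obtain from the compact support of the data (via finite propagation speed and Hardy in $d-1\ge3$), or else flag it as requiring the data in $\dot H^{-1}$.

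The key nonlinear estimate is the Moser/algebra inequality in $H^l$, valid since $l>\frac{d-1}{2}$ and $m$ is an odd integer so that the nonlinearity is a polynomial:
\[
\||\phi|^{m-1}\phi\|_{H^l}\lesssim\|\phi\|_{L^\infty}^{m-1}\|\phi\|_{H^l}\lesssim\|\phi\|_{H^{l+1}}^m .
\]
Define $\|\phi\|_X=\sup_\tau\|(\phi,\partial_\tau\phi)(\tau)\|_{H^{l+1}\times H^l}$. The Duhamel map $\Phi$ then satisfies
\[
\|\Phi(\phi)\|_X\le C\delta+C\varepsilon\|g\|_{L^1}\|\phi\|_X^m ,
\]
and similarly $\|\Phi(\phi)-\Phi(\psi)\|_X\le C\varepsilon\|g\|_{L^1}(\|\phi\|_X+\|\psi\|_X)^{m-1}\|\phi-\psi\|_X$. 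Since $\|g\|_{L^1}<\infty$ by Lemma~\ref{Lemma:coupling_integrable}, $\Phi$ is a contraction on the ball of radius $2C\delta$ in $X$ for $\delta$ small. This gives global existence, uniqueness, and \eqref{Eq:uniformSmall}. The uniqueness class can be extended to all of $X$ by the standard local-uniqueness argument.

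For scattering, set
\[
(\phi_+,\phi_+')=(\phi_0,\phi_1)-\varepsilon\int_0^\infty S(-s)\big(0,g|\phi|^{m-1}\phi(s)\big)\,ds .
\]
The integral converges absolutely in $H^{l+1}\times H^l$ (in the energy norm), because its integrand is bounded by $g(s)\,\delta^m$. Applying $S(\tau)$ and subtracting the Duhamel formula, and using that $S$ is unitary on the energy space $\dot H^1\times L^2$, we get
\[
\|(\phi,\partial_\tau\phi)(\tau)-S(\tau)(\phi_+,\phi_+')\|\lesssim\delta^m\int_\tau^\infty g(s)\,ds ,
\]
which is \eqref{Eq:scattering}. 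The tail bound \eqref{Eq:coupling_tail} shows that this tends to zero. Uniqueness of the scattering state follows because $S(\tau)$ is an isometry of the energy norm. Finally, \eqref{Eq:L2LinearDecay} follows directly from the definition \eqref{Eq:phiDef} together with the uniform bound \eqref{Eq:uniformSmall} (modulo the $L^2$ issue above). The rescaled data \eqref{Eq:rescaledData} come from differentiating $a^{(d-2)/2}f$ at $\tau=0$ using $\dot a=Ha$.
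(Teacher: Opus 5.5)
Your proposal follows the paper's argument: Duhamel formula, the algebra bound, $g\in L^1$ from Lemma~\ref{Lemma:coupling_integrable}, bootstrap, and $(\phi_+,\phi_+')=\lim S(-\tau)(\phi,\phi_\tau)$. The obstacle you flag is genuine, and it is in fact an error in the paper's own proof. The paper asserts that $S(\tau)$ is unitary on $H^{l+1}\times H^{l}$ and on $H^1\times L^2$. In reality $S(\tau)$ is an isometry only for homogeneous energy norms; on the inhomogeneous spaces one only has $\|S(\tau)\|\lesssim 1+|\tau|$.

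\textbf{The gap in your write-up.} You never actually resolve this issue. Your contraction estimate $\|\Phi(\phi)\|_X\le C\delta+C\varepsilon\|g\|_{L^1}\|\phi\|_X^m$ is written for the inhomogeneous norm $X$, so it uses exactly the uniform bound you have just said fails. Neither fallback helps: $\|\phi_0\|_{L^2}+\int_0^\tau\|\partial_\tau\phi\|_{L^2}$ and $\int_0^\tau(\tau-s)g(s)\,ds$ both grow linearly in $\tau$.

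\textbf{The $L^2$ part of \eqref{Eq:uniformSmall} is false for general data.} This holds in every dimension, not only low ones. With $n=d-1$, take $\phi_0=0$ and $\widehat{\phi_1}=\eta\,|\xi|^{-\beta}\mathbf{1}_{\{|\xi|<1\}}$ with $\frac{n-2}{2}<\beta<\frac n2$. Then $\|(\phi_0,\phi_1)\|_{H^{l+1}\times H^l}\lesssim\eta$, while $\|\sin(\tau|\nabla|)\,|\nabla|^{-1}\phi_1\|_{L^2}\simeq\eta\,\tau^{\beta+1-\frac n2}\to\infty$. Since the Duhamel term stays bounded in $L^2$ (see below), $\sup_\tau\|\phi(\tau)\|_{L^2}=\infty$ however small $\eta$ is.

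\textbf{How to make the argument rigorous.} Run the contraction in $Y:=\sup_{\tau}\big(\|\phi\|_{\dot H^1\cap\dot H^{l+1}}+\|\partial_\tau\phi\|_{H^l}\big)$, on which $S(\tau)$ is uniformly bounded. Because $n\ge3$ and $l+1>\frac n2$, one has $\dot H^1\cap\dot H^{l+1}\hookrightarrow L^{\frac{2n}{n-2}}\cap L^\infty$. Hence $\||\phi|^{m-1}\phi\|_{H^l}\lesssim Y^m$: the $L^2$ part uses $m\ge\frac{n}{n-2}$, and the $\dot H^l$ part uses Moser plus interpolation between $\dot H^1$ and $\dot H^{l+1}$. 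The fixed point then closes with $Y\lesssim\delta$.

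For the $L^2$ norm of the Duhamel term, use the dual Sobolev bound $\||\nabla|^{-1}F\|_{L^2}\lesssim\|F\|_{L^{2n/(n+2)}}$. With $F=g|\phi|^{m-1}\phi$ this equals $g\,\|\phi\|_{L^{2nm/(n+2)}}^m\lesssim g\,Y^m$. This is valid because an odd integer $m>m_{\mathrm{conf}}$ satisfies $m\ge\frac{n+2}{n-2}=\frac{d+1}{d-3}$; check $d=4$, where $m\ge5$, and $d=5$, where $m\ge3$. This bound does three things:
\begin{itemize}
\item it gives $\|\phi(\tau)\|_{L^2}\le\|\phi_0\|_{L^2}+\||\nabla|^{-1}\phi_1\|_{L^2}+C\delta^m\|g\|_{L^1}$;
\item it shows $\phi_+\in H^{l+1}$, whereas your claim that the integrand is bounded by $g(s)\delta^m$ in $H^{l+1}\times H^l$ is false for the same low-frequency reason;
\item applied to $\int_\tau^\infty S(\tau-s)(0,F(s))\,ds$, it gives \eqref{Eq:scattering} in the inhomogeneous $H^1\times L^2$ norm at the stated rate. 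Your isometry argument only gives it in $\dot H^1\times L^2$.
\end{itemize}

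Consequently \eqref{Eq:uniformSmall} and \eqref{Eq:L2LinearDecay} hold under the extra hypothesis $\||\nabla|^{-1}\phi_1\|_{L^2}\lesssim\delta$. For data supported in a fixed ball $B_R$, this follows from dual Hardy, $\||\nabla|^{-1}\phi_1\|_{L^2}\lesssim\||x|\phi_1\|_{L^2}\le R\|\phi_1\|_{L^2}$. The constant then depends on $R$, and finite propagation speed is not needed. Without this hypothesis one only gets $\|\varphi(\tau)\|_{L^2}\lesssim\delta(1+\tau)\,a(\tau)^{-\frac{d-2}{2}}$.
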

\begin{proof}
Let $n=d-1$.
Since $l>\frac{n}{2}$, $H^l(\mathbb{R}^n)$ is an algebra and $H^{l+1}(\mathbb{R}^n)\hookrightarrow L^\infty(\mathbb{R}^n)$.
With $\xi=\xi_{\mathrm{c}}$, equation \eqref{Eq:rescaled} reduces to
\[
\partial_\tau^2\phi-\Delta\phi=-\varepsilon\,a(\tau)^{\sigma}|\phi|^{m-1}\phi.
\]
Let $S(\tau)$ be the free wave evolution on $H^{l+1}\times H^{l}$, which is unitary on $H^{l+1}\times H^{l}$ and on $H^1\times L^2$.
The mild formulation reads
\begin{equation}\label{Eq:Duhamel}
(\phi(\tau),\phi_\tau(\tau))
= S(\tau)(\phi_0,\phi_1)
-\varepsilon\int_0^\tau S(\tau-s)\bigl(0,\,a(s)^{\sigma}|\phi(s)|^{m-1}\phi(s)\bigr)\,ds.
\end{equation}

\smallskip
\noindent\emph{Global existence.}
Let $X:=\sup_{\tau\ge0}\|(\phi(\tau),\phi_\tau(\tau))\|_{H^{l+1}\times H^{l}}$.
Using the algebra property of $H^l$ and Sobolev embedding,
\[
\bigl\||\phi|^{m-1}\phi\bigr\|_{H^{l}}
\;\lesssim\;\|\phi\|_{L^\infty}^{m-1}\|\phi\|_{H^{l}}
\;\lesssim\;\|\phi\|_{H^{l+1}}^{m}.
\]
Taking the $H^{l+1}\times H^{l}$ norm in \eqref{Eq:Duhamel} yields
\[
X \;\lesssim\; \|(\phi_0,\phi_1)\|_{H^{l+1}\times H^{l}} + X^{m}\int_0^\infty a(s)^{\sigma}\,ds.
\]
By Lemma~\ref{Lemma:coupling_integrable}, $I:=\int_0^\infty a(s)^{\sigma}\,ds<\infty$.
Choosing $\delta>0$ so that $C(\delta^{m-1}I)\ll1$ and applying a standard bootstrap/continuation argument gives \eqref{Eq:uniformSmall}.

\smallskip
\noindent\emph{Scattering.}
Apply $S(-\tau)$ to \eqref{Eq:Duhamel}:
\[
S(-\tau)(\phi(\tau),\phi_\tau(\tau))
=(\phi_0,\phi_1)-\varepsilon\int_0^\tau S(-s)\bigl(0,\,a(s)^{\sigma}|\phi(s)|^{m-1}\phi(s)\bigr)\,ds.
\]
The integrand is bounded in $H^1\times L^2$ by
\[
\bigl\|a(s)^{\sigma}|\phi(s)|^{m-1}\phi(s)\bigr\|_{L^2}
\;\lesssim\; a(s)^{\sigma}\|\phi(s)\|_{L^\infty}^{m-1}\|\phi(s)\|_{L^2}
\;\lesssim\; a(s)^{\sigma} X^{m}.
\]
Since $a(\cdot)^{\sigma}\in L^1([0,\infty))$, the integral converges in $H^1\times L^2$ as $\tau\to\infty$, so the limit
\[
(\phi_+,\phi_+'):=\lim_{\tau\to\infty} S(-\tau)(\phi(\tau),\phi_\tau(\tau))
\]
exists in $H^1\times L^2$ and in fact in $H^{l+1}\times H^{l}$ by the same argument.
Finally, subtracting the limit from the representation above gives \eqref{Eq:scattering} with the rate controlled by the tail integral $\int_\tau^\infty a(s)^{\sigma}\,ds$.
The $L^2$--decay \eqref{Eq:L2LinearDecay} follows from $\varphi=a^{-(d-2)/2}\phi$ and \eqref{Eq:uniformSmall}.
\end{proof}

\begin{remark}[Marginal conformal interaction]\label{Rem:marginal}
If $m=m_{\mathrm{conf}}$ and $\xi=\xi_{\mathrm{c}}$, then $\sigma=0$ and \eqref{Eq:rescaled} reduces exactly to the flat semilinear wave equation
\[
\partial_\tau^2\phi-\Delta\phi=-\varepsilon|\phi|^{m-1}\phi
\quad\text{on }\mathbb{R}^{d-1}.
\]
Thus any well--posedness and scattering statement for the Minkowski equation transfers \emph{verbatim} to the FLRW model via $\varphi=a^{-(d-2)/2}\phi$.
In particular, in the physically relevant case $d=4$ and $m=3$ (classical $\lambda\varphi^4$ theory), the rescaled equation is the defocusing cubic wave equation, for which global well--posedness in the energy class and small--data scattering follow from standard energy and Strichartz estimates.
\end{remark}

\begin{remark}[Nonconformal coupling]\label{Rem:nonconformal_scattering}
If $\xi\neq\xi_{\mathrm{c}}$, the rescaled equation \eqref{Eq:rescaled} contains the time--dependent potential $(d-1)(\xi-\xi_{\mathrm{c}})(2\dot H+(d-2)H^2)$.
The same argument yields scattering relative to the corresponding linear propagator.
In addition, if $2\dot H+(d-2)H^2\in L^1([0,\infty))$ (for instance if $\alpha>\tfrac12$), then the linear equation itself scatters to a free wave, and \eqref{Eq:scattering} implies free scattering for $\phi$.
\end{remark}

\subsection{Diffusion regime: weighted energy decay}\label{Sec3Diffusion}
We now return to the original variable $\varphi$ and complement the scattering picture above with decay estimates in the diffusion--dominated range of exponents for which weighted energy methods are effective.
\begin{theorem}\label{Thm:diffusion}
Let $d\ge4$ and $1<m<1+\frac{2}{d-1}$. 
Assume that the conformal Hubble rate satisfies
\begin{align}
H(\tau)=H_0(1+\tau)^{-\alpha},\qquad H_0>0,\qquad 0\leq \alpha<1.
\end{align}
Let $\varphi$ be a global solution of \eqref{Eq.2} with spatially localized (e.g.\ compactly supported) initial data $(f,g)\in H^{l+1}(\mathbb{R}^{d-1})\times H^l(\mathbb{R}^{d-1})$, where $l>\frac12(d-1)$, and assume the data are sufficiently small so that global existence holds (cf.\ \cite{fiki}). Then there exists a constant $C>0$, depending on the size of the initial data, such that for all $\tau\ge0$,
\begin{align}
\|\varphi(\tau,\cdot)\|_{L^1(\mathbb{R}^{d-1})}\leq{}& C\,a(\tau)^{-\frac{2}{m-1}}(1+\tau)^{-(1+\alpha)\left(\frac{1}{m-1}-\frac{d-1}{2}\right)}, \label{L1decay}\\
\|\varphi(\tau,\cdot)\|_{L^2(\mathbb{R}^{d-1})}\leq{}& C\,a(\tau)^{-\frac{2}{m-1}}(1+\tau)^{-(1+\alpha)\left(\frac{1}{m-1}-\frac{d-1}{4}\right)}. \label{L2decay}
\end{align}
\end{theorem}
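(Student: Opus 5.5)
The plan is to adapt the weighted energy method of Todorova--Yordanov and Nishihara--Zhai for variable damping to equation \eqref{Eq.2}, written with damping coefficient $b(\tau):=(d-2)H(\tau)=(d-2)H_0(1+\tau)^{-\alpha}$. Since $0\le\alpha<1$, this damping is non-integrable and "effective", so the relevant parabolic scaling is $\tau\mapsto B(\tau):=\int_0^\tau b(s)^{-1}ds\sim(1+\tau)^{1+\alpha}$. This explains the factor $(1+\tau)^{1+\alpha}$ in \eqref{L1decay}--\eqref{L2decay}. The nonlinear term in \eqref{Eq.2} carries the weight $a(\tau)^2$. I would first trade it for a polynomial coupling by factoring out the redshift. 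Setting $\psi:=a(\tau)^{2/(m-1)}\varphi$ gives $a^2|\varphi|^{m-1}\varphi=|\psi|^{m-1}\psi\,a^{-2/(m-1)}$. Multiplying the equation by $a^{2/(m-1)}$ then produces an equation for $\psi$ with an unweighted power nonlinearity. It also produces a modified damping $b-\frac{4H}{m-1}$ and lower-order potential terms of size $O(H^2+|\dot H|)$ coming from the derivatives of $a^{2/(m-1)}$ and the curvature coupling. The claimed bounds then read $\|\psi\|_{L^1}\lesssim(1+\tau)^{-(1+\alpha)(\frac1{m-1}-\frac{n}{2})}$ and $\|\psi\|_{L^2}\lesssim(1+\tau)^{-(1+\alpha)(\frac1{m-1}-\frac{n}{4})}$ with $n=d-1$. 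These are exactly the self-similar rates $\|\psi\|_{L^p}\sim B^{-\frac1{m-1}+\frac{n}{2p}}$ attached to the heat-type profile $u_t=\Delta u/b-|u|^{m-1}u/b$ in the subcritical range $m<1+2/n$.

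For the key steps, in order: (i) Derive the $\psi$-equation and check the sign of the lower-order potential. If the effective damping $b-\frac4{m-1}H$ becomes negative, I would retain only part of the factor, say $\psi=a^{\gamma}\varphi$ with $\gamma<\frac{d-2}{2}$. I would then recover the full power $a^{-2/(m-1)}$ from the dissipation produced by the absorbing nonlinearity; this is the delicate bookkeeping point. (ii) Use finite propagation speed for compactly supported data, so that $\operatorname{supp}\varphi(\tau)\subset\{|x|\le R+\tau\}$. (iii) Introduce the Todorova--Yordanov weight $e^{2\beta\psi_0(\tau,x)}$ with $\psi_0=\frac{|x|^2}{4(B(\tau)+B_0)}$, adapted to the variable damping as in Nishihara--Zhai. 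The weight satisfies $-\partial_\tau\psi_0\cdot b\ge|\nabla\psi_0|^2\cdot(\text{const})$. (iv) Multiply the equation by $e^{2\beta\psi_0}\partial_\tau\psi$ and by $e^{2\beta\psi_0}\psi$, combine with a small parameter, and obtain a differential inequality for the weighted energy $E_w(\tau)$. Because the nonlinearity is absorbing ($\varepsilon>0$ with the sign in \eqref{Eq.2}), the potential term $\frac{1}{m+1}\int e^{2\beta\psi_0}|\psi|^{m+1}$ enters with a good sign. (v) Multiply through by time weights $(B+B_0)^{k}$ and integrate. For the $\psi^2$-part of the weighted energy, use the nonlinear dissipation together with the Gagliardo--Nirenberg/Hölder interpolation $\int\psi^2\le(\int e^{2\beta\psi_0}|\psi|^{m+1})^{\frac2{m+1}}(\int e^{-c\psi_0})^{\frac{m-1}{m+1}}$. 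The last factor is $\sim(B+B_0)^{\frac{n}{2}\frac{m-1}{m+1}}$. This yields an ODE inequality of the type $\frac{d}{dB}Y+cY^{\frac{m+1}{2}}B^{-\frac{n(m-1)}{4}}\le0$. Its solution gives $Y\lesssim B^{-\frac{2}{m-1}+\frac{n}{2}}$, i.e. the $L^2$ rate in \eqref{L2decay} after taking square roots. The condition $m<1+\frac2n$ is exactly what makes this exponent negative and the decay nontrivial; this is where the diffusion-dominated assumption is used. (vi) Obtain the $L^1$ bound by Cauchy--Schwarz on the support, $\|\psi\|_{L^1}\le\|e^{\beta\psi_0}\psi\|_{L^2}\|e^{-\beta\psi_0}\|_{L^2}$, with the last factor $\sim B^{n/4}$. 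This converts the $L^2$ rate into \eqref{L1decay}. Finally, undo the substitution $\varphi=a^{-2/(m-1)}\psi$.

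I expect step (i)/(v) to be the main obstacle. The lower-order terms created by the rescaling and by the curvature coupling $\xi(d-1)(2\dot H+(d-2)H^2)$ are not small when $\alpha=0$ ($H\equiv H_0$). They must either have a favourable sign or be absorbed, uniformly in time, by the weighted dissipation $b\int e^{2\beta\psi_0}\psi_\tau^2$. If direct absorption fails, I would first try choosing the multiplier combination $\partial_\tau\psi+\frac{\nu}{b}\psi$ with $\nu$ tuned so that the cross terms cancel. Only as a fallback would I restrict to the smallness of $H_0$ or of $|\xi-\xi_c|$ relative to the damping.
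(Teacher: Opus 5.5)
There is a genuine gap. It sits exactly at the step you flag in (i)/(v), and it is structural rather than a matter of bookkeeping.

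For $\psi=a^{\gamma}\varphi$ the equation has damping $(d-2-2\gamma)H$ and nonlinear coefficient $\varepsilon a^{2-\gamma(m-1)}$. Nonnegative damping needs $\gamma\le\frac{d-2}{2}$. A non-increasing nonlinear coefficient, so that $\partial_\tau$ of the potential energy is not a source, needs $\gamma\ge\frac{2}{m-1}$. Since $\frac{2}{m-1}>d-1$ on the whole range $1<m<1+\frac{2}{d-1}$, these two requirements are incompatible.

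In particular, your full rescaling always gives anti-damping, $(d-2-\frac{4}{m-1})H<-dH$, whatever $H_0$ and $\xi$ are. So neither smallness of $H_0$ nor a tuned multiplier helps. The anti-damped linear part has growing solutions: for $\alpha=0$ and homogeneous data these are $a^{2/(m-1)}$ and $a^{2/(m-1)-(d-2)}$, the images of $\varphi\approx\mathrm{const}$ and $\varphi\approx a^{-(d-2)}$. Consequently the heat-profile heuristic for $\psi$ is not available.

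For a partial rescaling, the $\partial_\tau$-multiplier produces $-(2-\gamma(m-1))H\frac{\varepsilon a^{2-\gamma(m-1)}}{m+1}|\psi|^{m+1}$. This is of the same order as the only dissipative nonlinear term (the one from the $\psi$-multiplier). At best the two balance, and nothing returns the missing power of $a$.

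The factor $a^{-4/(m-1)}$ does arise pointwise in time from Young's inequality,
$\frac{|\varphi|^2}{(1+\tau)^{1+\alpha}}\le\eta\,a^2|\varphi|^{m+1}+C_\eta\,a^{-4/(m-1)}(1+\tau)^{-(1+\alpha)\frac{m+1}{m-1}}$.
But you then have to integrate $\frac{\mathrm{d}}{\mathrm{d}\tau}[(T_0+\tau)^kE]\le(\text{source})$ over $[0,\tau]$. Because $a$ increases, that time integral is dominated by early times, and the data term $T_0^kE(0)$ carries no power of $a$. Only polynomial decay survives. The paper's proof produces $a^{-4/(m-1)}$ in exactly this way; its rewriting in $\phi=a^{(d-2)/2}\varphi$ is cosmetic. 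It then pulls $a(\tau)^{-4/(m-1)}$ out of $\int_0^\tau$, which is not legitimate, so the paper does not supply the missing idea either.

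More fundamentally, $a^{-2/(m-1)}$ is precisely the amplitude at which $\varepsilon a^2|\varphi|^{m-1}\sim1$, i.e.\ below which the nonlinearity switches off. Heuristically there are two regimes:
below that scale the evolution is essentially linear and decays no faster than $a^{-(d-2)}$;
above it the nonlinearity acts as a large growing mass. Friction makes the action decay like $a^{-(d-2)}$, and the action scales like $aA^{(m+3)/2}$, so adiabatic invariance gives amplitude $A\sim a^{-2(d-1)/(m+3)}$.
Both rates are slower than $a^{-2/(m-1)}$ whenever $m<m_{\mathrm{conf}}$, hence throughout your range.

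For spatially homogeneous solutions with $\alpha=0$ (a caricature of the core of a localized solution) this can be made rigorous. Put $\mu^2=\xi(d-1)(d-2)H_0^2$ and $F=\frac12\dot\varphi^2+\frac12\mu^2\varphi^2+\frac{\varepsilon a^2}{m+1}|\varphi|^{m+1}$. Then $\dot F\ge-2(d-2)H_0F$, so $F\gtrsim a^{-2(d-2)}$. Suppose $|\varphi|\lesssim a^{-2/(m-1)}p(\tau)$ with $p$ polynomial. Then the non-kinetic part of $F$ is $\lesssim a^{-4/(m-1)}(p^2+p^{m+1})$, hence $|a^{d-2}\dot\varphi|\gtrsim1$. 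Meanwhile $\frac{\mathrm{d}}{\mathrm{d}\tau}(a^{d-2}\dot\varphi)=O(a^{d-2-2/(m-1)}(p+p^{m}))$ is integrable. So $a^{d-2}\dot\varphi\to L\neq0$ and $|\varphi(\tau)|\gtrsim a(\tau)^{-(d-2)}$, a contradiction.

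So no choice of rescaling, multiplier, or time weight should be expected to deliver the stated factor. What the potential-energy part of a weighted energy can control is $a^2\int e^{2\psi}|\varphi|^{m+1}$. Via H\"older on the support, that gives redshift factors of the type $a^{-2/(m+1)}$, not $a^{-2/(m-1)}$.
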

\begin{proof}
	We prove the theorem using a weighted energy method originally developed by \cite{Todorova}, which has proven effective in capturing diffusion-type decay for damped wave equations. This approach has been further refined in the context of diffusion phenomena and variable damping coefficients \cite{Nishihara,Radu,LuLi,Sobajima}. Here, we introduce the weight function
	\begin{align}\label{weight function}
	\psi(\tau,x) = \frac{b|x|^2}{(T_0+\tau)^{1+\alpha}},
	\end{align}
	where $0<b\ll 1$ is a fixed small constant and $T_0>1$. A direct computation yields
	\begin{align}
	\psi_\tau = -\frac{b(1+\alpha)|x|^2}{(T_0+\tau)^{2+\alpha}},\qquad
	\nabla\psi = \frac{2bx}{(T_0+\tau)^{1+\alpha}}.
	\end{align}
	Consequently,
	\begin{align}
	\frac{|\nabla \psi|^2}{(-\psi_\tau)}=\frac{4b}{(1+\alpha)}\frac{1}{(T_0+\tau)^\alpha}\leq \frac{4b}{(1+\alpha)}\frac{1}{(1+\tau)^\alpha}=\frac{4b}{(1+\alpha)}\frac{H(\tau)}{H_0}.
	\end{align}
	
	We multiply equation \eqref{Eq.2} by $e^{2\psi}\varphi_\tau$, such that
	\begin{align} \label{Eq.3}
	e^{2\psi}\varphi_\tau\varphi_{\tau\tau} - e^{2\psi}\varphi_\tau\Delta\varphi + (d-2)He^{2\psi} |\varphi_\tau|^2 \nonumber\\
	+\xi(d-1) (2\dot{H}+(d-2)H^2)e^{2\psi}\varphi \varphi_\tau + e^{2\psi} a^2\varepsilon |\varphi|^{m-1}\varphi\varphi_\tau = 0.
	\end{align}
	A straightforward computation shows that \eqref{Eq.3} can be written in divergence form as
	\begin{align} \label{Eq.4}
	&\frac{\partial}{\partial\tau}\left[\frac{e^{2\psi}}{2}\left(|\varphi_\tau|^2 + |\nabla\varphi|^2\right)+\frac{\varepsilon a^2 e^{2\psi}}{m+1}|\varphi|^{m+1}\right]+\xi (d-1)\left(2\dot{H}+(d-2)H^2\right)e^{2\psi}\varphi\varphi_\tau \nonumber\\
	&-\nabla\cdot\left(e^{2\psi}\varphi_\tau\nabla \varphi\right) + e^{2\psi}\left[\left((d-2)H-\frac{|\nabla\psi|^2}{(-\psi_\tau)}-\psi_\tau\right)|\varphi_\tau|^2 - \frac{2a\varepsilon}{m+1}\left(a\psi_\tau+a_\tau\right)|\varphi|^{m+1}\right]\nonumber\\
	&+\frac{e^{2\psi}}{(-\psi_\tau)}\left[\psi_\tau\nabla\varphi - \varphi_\tau\nabla\psi\right]^2 = 0.
	\end{align} 
	Then, multiplication \eqref{Eq.2} with $e^{2\psi}\varphi$ gives
	\begin{align} \label{Eq.5}
	e^{2\psi}\varphi\varphi_{\tau\tau} - e^{2\psi}\varphi\Delta\varphi + (d-2)e^{2\psi}\varphi H\partial_\tau\varphi \nonumber\\
	+ \xi(d-1)e^{2\psi}\varphi (2\dot{H}+(d-2)H^2)\varphi + e^{2\psi}\varphi a^2\varepsilon |\varphi|^{m-1}\varphi = 0,
	\end{align}
	which can be rewritten as
	\begin{align} \label{Eq.6}
	&\frac{\partial}{\partial\tau}\left[e^{2\psi}\left(\varphi\varphi_\tau+\frac{(d-2)H}{2}\varphi^2\right)\right] - \nabla\cdot(e^{2\psi}\varphi\nabla\varphi)\nonumber\\
	& + e^{2\psi}\left[\left(\xi(d-1)\left(2\dot{H}+(d-2)H^2\right)
	+ (d-2)H\left(-\psi_\tau + \frac{\alpha}{2(1+\tau)}\right)
	\right)|\varphi|^2\right.\nonumber \\
	&\left. + |\nabla\varphi|^2 - 2 \psi_\tau\varphi\varphi_\tau - |\varphi_\tau|^2 + 2\varphi\nabla\psi\cdot\nabla\varphi+ \varepsilon a^2 |\varphi|^{m+1}\right] = 0.
	\end{align}
	
	We observe that the identity \eqref{Eq.6} contains the unfavorable contribution 
	$-e^{2\psi}|\varphi_\tau|^2$, which cannot be controlled directly by the positive part of the energy functional. 
	In order to compensate for this bad term and recover a coercive weighted energy estimate, 
	we introduce an additional time-dependent weight and multiply \eqref{Eq.4} by $(T_0+\tau)^\alpha$,
	\begin{align} \label{Eq.7}
	&\frac{\partial}{\partial\tau}\left[\frac{e^{2\psi}(T_0+\tau)^\alpha}{2}\left(|\varphi_\tau|^2+|\nabla\varphi|^2\right)+\frac{\varepsilon a^2 e^{2\psi}(T_0+\tau)^\alpha}{m+1}|\varphi|^{m+1}\right]-\nabla\cdot\left[(T_0+\tau)^\alpha e^{2\psi}\varphi_\tau\nabla\varphi\right] \nonumber \\
	&+\xi (d-1)\left(2\dot{H}+(d-2)H^2\right)e^{2\psi}(T_0+\tau)^\alpha\varphi\varphi_\tau + e^{2\psi}\left[(d-2)H_0 - \frac{4b}{1+\alpha}\right. \nonumber \\
	&\left.-\frac{\alpha}{2(T_0+\tau)^{1-\alpha}}-(T_0+\tau)^\alpha \psi_\tau\right]|\varphi_\tau|^2 + e^{2\psi}(T_0+\tau)^\alpha\frac{(-2\varepsilon a)}{m+1}\left(a\psi_\tau+a_\tau\right)|\varphi|^{m+1}  \nonumber \\
	& - \frac{\alpha e^{2\psi}}{(T_0 + \tau)^{1-\alpha}}\left[\frac{1}{2}|\nabla\varphi|^2 + \frac{\varepsilon a^2}{m+1}|\varphi|^{m+1}\right] + \frac{(T_0 + \tau)^\alpha e^{2\psi}}{(-\psi_\tau)}\left[\psi_\tau\nabla\varphi - \varphi_\tau\nabla \psi\right]^2 =0.
	\end{align}
	Moreover, since
	\begin{align}
	-\frac{1}{\psi_\tau}\left[\psi_\tau\nabla\varphi - \varphi_\tau \nabla\psi\right]^2 = \frac{1}{-\psi_\tau} \left[(\psi_\tau)^2|\nabla\varphi|^2 - 2\psi_\tau\varphi_\tau\nabla\psi.\nabla\varphi + |\nabla\psi|^2 |\varphi_\tau|^2\right]
	\end{align}
	and by the inequality $2ab\leq a^2+b^2$, we have
	\begin{align}
	-\frac{1}{\psi_\tau}\left[\psi_\tau\nabla\varphi - \varphi_\tau \nabla\psi\right]^2 	\geq -\psi_\tau|\nabla\varphi|^2 + \frac{|\nabla\psi|^2}{(-\psi_\tau)}|\varphi_\tau|^2.
	\end{align}
	Substituting this lower bound into \eqref{Eq.7}, we arrive at
	\begin{align} \label{Eq.8}
	&\frac{\partial}{\partial\tau}\left[\frac{e^{2\psi}(T_0+\tau)^\alpha}{2}\left(|\varphi_\tau|^2+|\nabla\varphi|^2\right)+\frac{\varepsilon a^2 e^{2\psi}(T_0+\tau)^\alpha}{m+1}|\varphi|^{m+1}\right]\nonumber\\
	&-\nabla\cdot\left((T_0+\tau)^\alpha e^{2\psi}\varphi_\tau\nabla\varphi\right)\nonumber\\
	&+\xi (d-1)\left(-\frac{2H_0\alpha}{(1+\tau)}+\frac{(d-2)H_0^2}{(1+\tau)^\alpha}\right)e^{2\psi}\varphi\varphi_\tau\nonumber\\
	& + e^{2\psi}(T_0+\tau)^\alpha \left(\frac{(d-2)H_0}{(T_0+\tau)^\alpha} -\frac{\alpha}{2(T_0+\tau)}- \psi_\tau\right)|\varphi_\tau|^2 \nonumber \\
	& -e^{2\psi}(T_0+\tau)^\alpha \left(\frac{\alpha}{2(T_0+\tau)}+\psi_\tau\right)|\nabla\varphi|^2 \nonumber\\
	&-\frac{\varepsilon a e^{2\psi} (T_0+\tau)^\alpha}{m+1} \left(2(a\psi_\tau+a_\tau) + \frac{\alpha a}{(T_0+\tau)}\right) |\varphi|^{m+1} \leq 0,
	\end{align}
	in the sense of distributions. At this stage, \eqref{Eq.8} still contains lower-order contributions,
	namely the sign–indefinite term involving $\varphi\varphi_\tau$,
	as well as the nonlinear potential term $|\varphi|^{m+1}$ with non-coercive coefficients. These terms cannot be absorbed directly by the principal
	energy terms $|\varphi_\tau|^2$ and $|\nabla\varphi|^2$, and therefore require an additional mixed space-time multiplier argument.
	
	Multiplying \eqref{Eq.6} by a positive constant $\nu > 0$, we obtain
	\begin{align} \label{Eq.9}
	&\frac{\partial}{\partial\tau}\left[e^{2\psi}\left(\nu \varphi\varphi_\tau + \nu\frac{(d-2)}{2}\frac{H_0}{(1+\tau)^\alpha}\varphi^2\right)\right] - \nabla\cdot(\nu e^{2\psi}\varphi \nabla\varphi) - \nu e^{2\psi}|\varphi_\tau|^2 + \nu e^{2\psi}|\nabla\varphi|^2 \nonumber\\
	&+e^{2\psi}\nu\left[\xi(d-1)\left(\frac{-2H_0 \alpha}{(1+\tau)^{1+\alpha}}+\frac{(d-2)H_0^2}{(1+\tau)^{2\alpha}}\right)+(d-2)\left(-\psi_\tau+\frac{\alpha}{2(1+\tau)}\right)\frac{H_0}{(1+\tau)^\alpha}\right]|\varphi|^2 \nonumber\\
	&+\nu \varepsilon a^2 e^{2\psi} |\varphi|^{m+1} + e^{2\psi} \left(-2\nu \psi_\tau \varphi \varphi_\tau + 2\nu \varphi \nabla\psi\cdot\nabla\varphi\right) = 0.
	\end{align}
	Adding \eqref{Eq.9} to the differential inequality \eqref{Eq.8} yields
	\begin{align} \label{Eq.10}
	&\frac{\partial}{\partial\tau}\left[e^{2\psi}\left(\frac{(T_0+\tau)^\alpha}{2}|\varphi_\tau|^2 + \nu \varphi\varphi_\tau + \nu \frac{(d-2)}{2}\frac{H_0}{(1+\tau)^\alpha}\varphi^2+ \frac{(T_0+\tau)^\alpha}{2}|\nabla\varphi|^2 \right.\right.\nonumber\\
	&\left.\left. + \frac{\varepsilon a^2(T_0+\tau)^\alpha}{m+1}|\varphi|^{m+1}\right)\right]-\nabla\cdot\left(e^{2\psi}\left((T_0+\tau)^\alpha\varphi_\tau\nabla\varphi+\nu \varphi \nabla\varphi\right)\right)\nonumber\\
	& + e^{2\psi}\left[\left((d-2)H_0 - \frac{\alpha}{2(T_0+\tau)^{1-\alpha}}-\nu-(T_0+\tau)^\alpha\psi_\tau\right)|\varphi_\tau|^2 \right.\nonumber\\
	&+\left(\nu - \frac{\alpha}{2(T_0+\tau)^{1-\alpha}}-\psi_\tau(T_0+\tau)^\alpha\right)|\nabla\varphi|^2  +\xi (d-1)\left(-\frac{2H_0\alpha}{(1+\tau)}+\frac{(d-2)H_0^2}{(1+\tau)^\alpha}\right)\varphi\varphi_\tau\nonumber\\
	&+\nu\left(\xi(d-1)\left(\frac{-2H_0 \alpha}{(1+\tau)^{1+\alpha}}+\frac{(d-2)H_0^2}{(1+\tau)^{2\alpha}}\right)+(d-2)\left(-\psi_\tau+\frac{\alpha}{2(1+\tau)}\right)\frac{H_0}{(1+\tau)^\alpha}\right)|\varphi|^2 \nonumber\\
	& \left. +\left(\nu \varepsilon a^2 - \frac{\varepsilon a^2 \alpha}{(m+1)(T_0+\tau)^{1-\alpha}} - \frac{2\varepsilon a}{m+1}(a\psi_\tau + a_\tau)(T_0+\tau)^\alpha \right)|\varphi|^{m+1}\right]\nonumber \\
	&+e^{2\psi}\left(-2\nu\psi_\tau\varphi\varphi_\tau + 2\nu \varphi \nabla\varphi \cdot\nabla\psi\right) \leq 0.
	\end{align}
	
	We now fix the parameter
	\begin{align}
	\nu:=\frac{(d-2)H_0}{4}.
	\end{align}
	Then, since $(T_0+\tau)^{-(1-\alpha)}\to0$ as $T_0\to\infty$,
	there exists $T_0$ sufficiently large such that for all $\tau\ge0$,
	\begin{align}
	(d-2)H_0-\frac{\alpha}{2(T_0+\tau)^{1-\alpha}}-\nu
	\geq& \frac{(d-2)H_0}{2},\\
	\nu-\frac{\alpha}{2(T_0+\tau)^{1-\alpha}}
	\geq& \frac{(d-2)H_0}{8},\\
	\varepsilon a^2\left(\nu-\frac{\alpha}{(m+1)(T_0+\tau)^{1-\alpha}}\right)
	\geq& \frac{\varepsilon a^2(d-2)H_0}{8}.
	\end{align}
	Hence,
	\begin{align}
	&\frac{\partial}{\partial\tau}\left[e^{2\psi}\left(\frac{(T_0+\tau)^\alpha}{2}|\varphi_\tau|^2 + \frac{(d-2)H_0}{4} \varphi\varphi_\tau +  \frac{(d-2)^2H_0^2}{8(1+\tau)^\alpha}\varphi^2+ \frac{(T_0+\tau)^\alpha}{2}|\nabla\varphi|^2 \right.\right.\nonumber\\
	&\left.\left. + \frac{\varepsilon a^2(T_0+\tau)^\alpha}{m+1}|\varphi|^{m+1}\right)\right]-\nabla\cdot\left(e^{2\psi}\left((T_0+\tau)^\alpha\varphi_\tau\nabla\varphi+\frac{(d-2)H_0}{4} \varphi \nabla\varphi\right)\right)\nonumber\\
	& + e^{2\psi}\left[\left(\frac{(d-2)H_0}{2}-(T_0+\tau)^\alpha\psi_\tau\right)|\varphi_\tau|^2 +\left(\frac{(d-2)H_0}{8}-\psi_\tau(T_0+\tau)^\alpha\right)|\nabla\varphi|^2 \right.\nonumber\\
	&+\frac{(d-2)H_0}{4}\left(\xi(d-1)\left(\frac{-2H_0 \alpha}{(1+\tau)^{1+\alpha}}+\frac{(d-2)H_0^2}{(1+\tau)^{2\alpha}}\right)+\frac{(d-2)H_0}{(1+\tau)^\alpha}\left(-\psi_\tau+\frac{\alpha}{2(1+\tau)}\right)\right)|\varphi|^2 \nonumber\\
	& \left. +\left(\frac{\varepsilon a^2(d-2)H_0}{8} - \frac{2\varepsilon a}{m+1}(a\psi_\tau + a_\tau)(T_0+\tau)^\alpha \right)|\varphi|^{m+1}\right]\nonumber \\
	& +\xi (d-1)\left(-\frac{2H_0\alpha}{(1+\tau)}+\frac{(d-2)H_0^2}{(1+\tau)^\alpha}\right)e^{2\psi}\varphi\varphi_\tau\nonumber\\
	&+\frac{(d-2)H_0}{2}e^{2\psi}\left(-\psi_\tau\varphi\varphi_\tau + \varphi \nabla\varphi \cdot\nabla\psi\right) \leq 0.
	\end{align}
	
	Using Young's inequality,
	\begin{align}
	|\varphi\varphi_\tau|\leq\frac{1}{2}\left(\left|\varphi\right|^2+\left|\varphi_\tau\right|^2\right),\qquad
	\left|\varphi \nabla\varphi\cdot\nabla\psi \right|\leq \frac{1}{2}\left(|\nabla\psi|^2|\varphi|^2 + |\nabla\varphi|^2\right),
	\end{align}
	and taking integration over $\mathbb{R}^{d-1}$, yields
	\begin{align}
	&\frac{\mathrm{d}}{\mathrm{d}\tau} \int_{\mathbb{R}^{d-1}}\left[e^{2\psi}\left(\frac{(T_0+\tau)^\alpha}{2}|\varphi_\tau|^2 + \frac{(d-2)H_0}{4} \varphi\varphi_\tau +  \frac{(d-2)^2H_0^2}{8(1+\tau)^\alpha}|\varphi|^2+ \frac{(T_0+\tau)^\alpha}{2}|\nabla\varphi|^2 \right.\right.\nonumber\\
	&\left.\left. + \frac{\varepsilon a^2(T_0+\tau)^\alpha}{m+1}|\varphi|^{m+1}\right)\right]\mathrm{d}x -\int_{\mathbb{R}^{d-1}}\nabla\cdot\left(e^{2\psi}\left((T_0+\tau)^\alpha\varphi_\tau\nabla\varphi+\frac{(d-2)H_0}{4} \varphi \nabla\varphi\right)\right)\mathrm{d}x\nonumber\\
	& + \int_{\mathbb{R}^{d-1}} e^{2\psi}\left[\left(\frac{(d-2)H_0}{2}+\frac{\xi (d-1)}{2}\left(-\frac{2H_0\alpha}{(1+\tau)}+\frac{(d-2)H_0^2}{(1+\tau)^\alpha}\right)\right.\right.\nonumber\\
	& \left.-\psi_\tau\left( \frac{(d-2)H_0}{4}+(T_0+\tau)^\alpha\right)\right)|\varphi_\tau|^2+\left(\frac{3(d-2)H_0}{8}-\psi_\tau(T_0+\tau)^\alpha\right) |\nabla\varphi|^2\nonumber\\
	&+\left(\frac{(d-2)H_0}{4}\left(\xi(d-1)\left(\frac{-2H_0 \alpha}{(1+\tau)^{1+\alpha}}+\frac{(d-2)H_0^2}{(1+\tau)^{2\alpha}}\right)+\frac{(d-2)H_0}{(1+\tau)^\alpha}\left(-\psi_\tau+\frac{\alpha}{2(1+\tau)}\right)\right)\right. \nonumber\\
	& \left.+\frac{\xi (d-1)}{2}\left(-\frac{2H_0\alpha}{(1+\tau)}+\frac{(d-2)H_0^2}{(1+\tau)^\alpha}\right)-\frac{(d-2)H_0}{4}\psi_\tau+ \frac{(d-2)H_0}{4} |\nabla\psi|^2\right)|\varphi|^2 \nonumber\\
	& \left. +\left(\frac{\varepsilon a^2(d-2)H_0}{8} - \frac{2\varepsilon a}{m+1}(a\psi_\tau + a_\tau)(T_0+\tau)^\alpha \right)|\varphi|^{m+1}\right]\mathrm{d}x\leq 0.
	\end{align}
	
	Under the assumption that $\varphi(\tau,\cdot)$ has compact support (or decays sufficiently fast at spatial infinity), the boundary integral vanishes
	\begin{align}
	\int_{\mathbb{R}^{d-1}}\nabla\cdot\left(e^{2\psi}\left((T_0+\tau)^\alpha\varphi_\tau\nabla\varphi+\frac{(d-2)H_0}{4} \varphi \nabla\varphi\right)\right)\mathrm{d}x=0,
	\end{align}
	so that the flux term does not contribute to the integrated inequality.
	
	In addition, a persistent difficulty in establishing weighted energy estimates for scalar field dynamics in expanding spacetimes stems from the nonlinear potential term, which explicitly involves the cosmological scale factor $a(\tau)$. In contrast to the constant-coefficient setting	studied in \cite{Nishihara}, the scale factor here grows monotonically in an inflationary regime, reflecting the accelerated expansion of the FLRW background. This growth amplifies the nonlinear source term and may counteract the dissipative effect of the Hubble friction, encoded in the time-dependent damping coefficient. This motivates the introduction of a cosmological rescaling
	\begin{align}\label{phi}
	\phi(\tau,x):=a(\tau)^{\frac{d-2}{2}}\varphi(\tau,x),
	\end{align}
	that absorbs the scale factor into the field amplitude and extracts the essential time dependence of the nonlinear
	interaction. Under this transformation, the weighted energy identity derived above can be rewritten in the form
	\begin{align}
	&\frac{\mathrm{d}}{\mathrm{d}\tau}\int_{\mathbb{R}^{d-1}}e^{2\psi}\left[a^{-(d-2)}\left(\frac{(T_0+\tau)^\alpha}{2}|\phi_\tau|^2 +\frac{(T_0+\tau)^\alpha}{2}|\nabla\phi|^2 + \frac{(T_0+\tau)^\alpha}{(1+\tau)^{2\alpha}}\frac{(d-2)H_0}{8}|\phi|^2 \right.\right.\nonumber\\
	&\left.\left.+\frac{(d-2)H_0}{4}\left(1-2\left(\frac{T_0+\tau}{1+\tau}\right)^\alpha\right)\phi\phi_\tau\right)+ \frac{\varepsilon a^2(T_0+\tau)^\alpha}{m+1}a^{-\frac{(d-2)(m+1)}{2}}|\phi|^{m+1}\right]\mathrm{d}x\nonumber\\
	&+\int_{\mathbb{R}^{d-1}} e^{2\psi}\left[a^{-(d-2)}\left(\left(1-\frac{(d-2)H_0}{2(1+\tau)^\alpha}\right)\left(\frac{(d-2)H_0}{2}+\frac{\xi (d-1)}{2}\left(-\frac{2H_0\alpha}{(1+\tau)}+\frac{(d-2)H_0^2}{(1+\tau)^\alpha}\right)\right.\right.\right.\nonumber\\
	& \left.-\psi_\tau\left( \frac{(d-2)H_0}{4}+(T_0+\tau)^\alpha\right)\right)|\phi_\tau|^2+\left(\frac{3(d-2)H_0}{8}-\psi_\tau(T_0+\tau)^\alpha\right) |\nabla\phi|^2\nonumber\\
	&+\left(\frac{(d-2)^2H_0^2}{4(1+\tau)^\alpha}\left(\frac{\alpha}{2(1+\tau)}-1+\frac{(d-2)H_0}{2(1+\tau)^\alpha}-\psi_\tau\left(\frac{1}{2}+\left(\frac{T_0+\tau}{1+\tau}\right)^\alpha + \frac{(d-2)H_0}{4(1+\tau)^\alpha}\right)\right)\right.\nonumber\\
	&+\frac{\xi(d-1)}{2}\left(-\frac{2H_0\alpha}{(1+\tau)}+\frac{(d-2)H_0^2}{(1+\tau)^\alpha}\right)\left(1+\frac{(d-2)^2H_0^2}{4(1+\tau)^{2\alpha}}\right)+\frac{(d-2)H_0}{2}\psi_\tau\left(\frac{T_0+\tau}{1+\tau}\right)^\alpha\nonumber\\
	&\left.\left.-\frac{(d-2)H_0}{4}\psi_\tau+ \frac{(d-2)H_0}{4} |\nabla\psi|^2\right)|\phi|^2\right)\nonumber\\
	& \left. +\left(\frac{\varepsilon a^2(d-2)H_0}{8} - \frac{2\varepsilon a}{m+1}(a\psi_\tau + a_\tau)(T_0+\tau)^\alpha \right)a^{-\frac{(d-2)(m+1)}{2}}|\phi|^{m+1}\right]\mathrm{d}x.
	\end{align}	
	The curvature-coupling term $\xi$ produces a time-dependent function $O((1+\tau)^{-2\alpha})$, which decays faster than the cosmological damping $H(\tau)=O((1+\tau)^{-\alpha})$. Hence, for later calculations, it can be absorbed into the lower-order coefficient in the weighted energy functional.
	
	Let us define
	\begin{align}\label{gamma}
	\gamma:=2-\frac{(d-2)(m+1)}{2}=\sigma-(d-2).
	\end{align}
	The decay mechanism governing the asymptotic regime is naturally interpreted as a cosmological redshift suppression of the nonlinear interaction. In an accelerating FLRW background, the expansion induces a systematic dilution of the field amplitude, which, after the cosmological rescaling \eqref{phi}, manifests itself as a time-dependent prefactor multiplying the nonlinear potential. In the parameter range $d\geq 4$ and $1<m<1+\frac{2}{d-1}$, this prefactor takes the form $a^\gamma$ with $\gamma<0$, implying that the effective strength of the self-interaction decays monotonically over time. As a consequence, the nonlinear term becomes asymptotically negligible relative to the principal linear operator, and the long-time dynamics are governed by a dispersive linear evolution. This yields a precise mathematical formulation of the redshift-induced decay estimates for nonlinear scalar field interactions in FLRW backgrounds.
	
	Next, we introduce a constant $C_0>0$, such that the weighted energy inequality can be written as follows
	\begin{align}\label{Eq.11}
	&\frac{\partial}{\partial\tau}\tilde{E}_\psi(\tau;\phi)+H_\psi(\tau;\phi)\leq 0.
	\end{align}	
	where,
	\begin{align}
	\tilde{E}_\psi(\tau;\phi):=&\int_{\mathbb{R}^{d-1}}e^{2\psi}\left[a^{-(d-2)}\left(\frac{(T_0+\tau)^\alpha}{2}|\phi_\tau|^2 +\frac{(T_0+\tau)^\alpha}{2}|\nabla\phi|^2 + \frac{(T_0+\tau)^\alpha}{(1+\tau)^{2\alpha}}\frac{(d-2)H_0}{8}|\phi|^2 \right.\right.\nonumber\\
	&\left.\left.+\frac{(d-2)H_0}{4}\left(1-2\left(\frac{T_0+\tau}{1+\tau}\right)^\alpha\right)\phi\phi_\tau\right)+ \frac{\varepsilon (T_0+\tau)^\alpha}{m+1}a^\gamma|\phi|^{m+1}\right]\mathrm{d}x,
	\end{align}
	and
	\begin{align}
	H_\psi(\tau,\phi):=&C_0\int_{\mathbb{R}^{d-1}} e^{2\psi}\left\{ \left(1-\psi_\tau(T_0+\tau)^\alpha\right)\left[a^{-(d-2)}\left(|\phi_\tau|^2+|\nabla\phi|^2\right)+\frac{\varepsilon a^\gamma}{m+1}|\phi|^{m+1}\right]\right.\nonumber\\
	&\left.+\frac{\varepsilon a^\gamma}{m+1}|\phi|^{m+1}-a^{-(d-2)}\psi_\tau H_0(1+\tau)^{-\alpha}|\phi|^2\right\}\mathrm{d}x.
	\end{align}
	We also define
	\begin{align}\label{Ebar}
	\bar{E}_\psi(\tau;\phi):=&\int_{\mathbb{R}^{d-1}}e^{2\psi}\left\{(T_0+\tau)^\alpha \left[a^{-(d-2)}\left(|\phi_\tau|^2 + |\nabla\phi|^2\right) + \frac{\varepsilon a^\gamma}{m+1}|\phi|^{m+1}\right] \right.\nonumber\\
	&\left.+a^{-(d-2)}H_0(1+\tau)^{-\alpha}|\phi|^2 \right\}\mathrm{d}x.
	\end{align}
	Then there exist constants $C_1,C_2>0,$ independent of $\tau$, such that
	\begin{align}
	C_1 \bar{E}_\psi(\tau;\phi) \leq \tilde{E}_\psi(\tau;\phi) \leq C_2 \bar{E}_\psi(\tau;\phi).
	\end{align}
	Multiplying \eqref{Eq.11} by $(T_0+\tau)^k$ for a fixed $k>0$, we obtain
	\begin{align} \label{Eq.12}
	\frac{\mathrm{d}}{\mathrm{d}\tau}\left[(T_0+\tau)^k\tilde{E}_\psi (\tau;\phi)\right] + (T_0+\tau)^k \left[H_\psi(\tau;\phi)-\frac{k}{(T_0+\tau)}\tilde{E}_\psi (\tau;\phi)\right]\leq 0.
	\end{align}
	Using the equivalence between $\tilde{E}_\psi(\tau;\phi)$ and $\bar{E}_\psi(\tau;\phi)$, we estimate
	\begin{align}
	H_\psi(\tau;\phi)-\frac{k}{T_0+\tau}\tilde E_\psi(\tau;\phi)
	\geq H_\psi(\tau;\phi)-\frac{kC_2}{T_0+\tau}\bar{E}_\psi(\tau;\phi)\geq I_1+I_2,
	\end{align} 
	where
	\begin{align}
	I_1:=&\int_{\mathbb{R}^{d-1}} e^{2\psi}\left[C_0\left(1-\psi_\tau(T_0+\tau)^\alpha\right)-\frac{kC_2(T_0+\tau)^\alpha}{(T_0+\tau)}\right]\nonumber\\
	&\times\left[a^{-(d-2)}\left(|\phi_\tau|^2 + |\nabla\phi|^2\right)+\frac{\varepsilon a^\gamma}{m+1}|\phi|^{m+1}\right]\mathrm{d}x,\\
	I_2:=&\int_{\mathbb{R}^{d-1}} e^{2\psi}\left[-\frac{kC_2}{(T_0+\tau)}a^{-(d-2)} H_0 (1+\tau)^{-\alpha}|\phi|^2 - C_0 a^{-(d-2)} \psi_\tau H_0(1+\tau)^{-\alpha}|\phi|^2\right.\nonumber\\
	&\left. + C_0\frac{\varepsilon a^\gamma}{m+1}|\phi|^{m+1}\right]\mathrm{d}x.
	\end{align}
	
	We fix $T_0\gg 1$ sufficiently large so that $\frac{1}{2}C_0 > \frac{kC_2}{T_0^{1-\alpha}}$. Since $\psi_\tau\leq 0$ and $(T_0+\tau)^\alpha(-\psi_\tau)$ can be made uniformly small by taking $T_0$ large, we obtain
	\begin{align}\label{Eq.13}
	I_1\geq \frac{C_0}{2}\int_{\mathbb{R}^{d-1}} e^{2\psi}\left(1-\psi_\tau(T_0+\tau)^\alpha\right)\left[a^{-(d-2)}\left(|\phi_\tau|^2 + |\nabla\phi|^2\right)+ \frac{\varepsilon a^\gamma}{m+1}|\phi|^{m+1}\right]\mathrm{d}x.
	\end{align}
	For $d\geq 4$ and $1<m$, the exponent $\gamma$ defined in \eqref{gamma} is negative, so that $a^\gamma\leq 1$ and the weighted nonlinear term is uniformly dominated by the standard energy density. Consequently, the integrand in $I_1$ is nonnegative and controlled by the basic weighted energy, which implies that $I_1$ is uniformly bounded in time.
	
	We next introduce the spatial decomposition
	\begin{align}
	\Omega_\kappa := \left\{x\in\mathbb{R}^{d-1} : \frac{|x|^2}{(T_0+\tau)^{1+\alpha}}\geq \kappa \right\},\qquad \Omega_\kappa^c := \mathbb{R}^{d-1}\setminus \Omega_\kappa,
	\end{align}
	where $\kappa>0$ will be chosen sufficiently large, and write
	\begin{align}
	I_2 =& \int_{\Omega_\kappa^c} e^{2\psi} \left\{\left(\frac{C_0 b(1+\alpha)\kappa}{(T_0+\tau)}-\frac{kC_2}{(T_0+\tau)}\right)a^{-(d-2)}H_0(1+\tau)^{-\alpha}|\phi|^2+\frac{C_0\varepsilon a^\gamma}{m+1}|\phi|^{m+1}\right\}\mathrm{d}x\nonumber\\
	&+ \int_{\Omega_\kappa} e^{2\psi} \left\{\left(\frac{C_0 b(1+\alpha)\kappa}{(T_0+\tau)}-\frac{kC_2}{(T_0+\tau)}\right)a^{-(d-2)}H_0(1+\tau)^{-\alpha}|\phi|^2+\frac{C_0\varepsilon a^\gamma}{m+1}|\phi|^{m+1}\right\}\mathrm{d}x\nonumber\\
	&=I_{21}+I_{22}.
	\end{align}
	By taking $\kappa\gg 1$ sufficiently large, we obtain
	\begin{align}\label{Eq.14}
	I_{21}\geq& \int_{\Omega_\kappa^c} e^{2\psi} \left(\frac{C_0 b(1+\alpha)\kappa}{(T_0+\tau)}-\frac{kC_2}{(T_0+\tau)}\right)a^{-(d-2)}H_0(1+\tau)^{-\alpha}|\phi|^2\mathrm{d}x \nonumber\\
	&+C_0\int_{\Omega_\kappa^c} e^{2\psi}\frac{\varepsilon a^\gamma}{m+1}|\phi|^{m+1}\mathrm{d}x\nonumber\\
	\geq& C_0\int_{\Omega_\kappa^c} e^{2\psi}\frac{\varepsilon a^\gamma}{m+1}|\phi|^{m+1}\mathrm{d}x \geq 0.
	\end{align}
	On the region $\Omega_\kappa$, we write
	\begin{align}
	I_{22}\geq&\int_{\Omega_\kappa} e^{2\psi}\left(\frac{C_0 b(1+\alpha)\kappa}{(T_0+\tau)}-\frac{kC_2}{(T_0+\tau)}\right)a^{-(d-2)} H_0(1+\tau)^{-\alpha}|\phi|^2 \mathrm{d}x\nonumber\\
	&+ C_0 \int_{\Omega_\kappa} e^{2\psi} \frac{\varepsilon a^\gamma}{m+1}|\phi|^{m+1}\mathrm{d}x\nonumber\\
	\geq& \int_{\Omega_\kappa} e^{2\psi} \left(C_0 \frac{\varepsilon a^\gamma}{m+1}|\phi|^{m+1}-\frac{kC_2 H_0}{(1+\tau)^{1+\alpha}}a^{-(d-2)}|\phi|^2\right)\mathrm{d}x.
	\end{align}
	We now apply Young's inequality with 
	\begin{align}
	p=\frac{m+1}{2},\qquad q=\frac{m+1}{m-1}, \qquad \frac{1}{p} + \frac{1}{q} = 1,
	\end{align}
	which yields
	\begin{align}
	\left(a^{\frac{4}{m+1}}a^{-(d-2)}|\phi|^2\right)\left(a^{-\frac{4}{m+1}}kC_2H_0(1+\tau)^{-(1+\alpha)}\right)\leq& \frac{C_0\varepsilon a^\gamma}{2(m+1)}|\phi|^{m+1}\nonumber\\
	&+ C a^{-\frac{4}{(m-1)}}(1+\tau)^{-(1+\alpha)\frac{(m+1)}{(m-1)}},
	\end{align}
	where
	\begin{align}
	C_0=\frac{4}{\varepsilon},\qquad C=\frac{(m-1)}{(m+1)}\left(kC_2 H_0\right)^{\frac{(m+1)}{m-1}}.
	\end{align}
	Therefore, we obtain
	\begin{align}\label{Eq.15}
	I_{22}\geq& \frac{C_0\varepsilon}{2(m+1)}a^\gamma\int_{\Omega_\kappa} e^{2\psi} |\phi|^{m+1}\mathrm{d}x - C a^{-\frac{4}{(m-1)}}(1+\tau)^{-(1+\alpha)\frac{(m+1)}{(m-1)}} \int_{\Omega_\kappa} e^{2\psi}\mathrm{d}x\nonumber\\
	\geq& \frac{C_0\varepsilon}{2(m+1)}a^\gamma\int_{\Omega_\kappa} e^{2\psi} |\phi|^{m+1}\mathrm{d}x -  C a^{-\frac{4}{(m-1)}}(1+\tau)^{-(1+\alpha)\left(\frac{m+1}{m-1}-\frac{(d-1)}{2}\right)}.
	\end{align}
	For $d\geq 4$ and $1<m<1+\frac{2}{d-1}$, we have $\frac{m+1}{m-1}-\frac{(d-1)}{2}>0$,
	so the second term on the right-hand side decays in time and is uniformly controlled by the basic weighted energy. Hence, $I_{22}$ remains uniformly bounded.
	
	Combining \eqref{Eq.13}, \eqref{Eq.14}, and \eqref{Eq.15},  we conclude that
	\begin{align} \label{Eq.16}
	(T_0+\tau)^k \left[H_\psi(\tau;\phi)-\frac{k}{(T_0+\tau)}\tilde{E}_\psi (\tau;\phi)\right]\geq& C_3(T_0+\tau)^k\int_{\mathbb{R}^{d-1}}e^{2\psi}\left(1-\psi_\tau(1+\tau)^\alpha\right)\nonumber\\
	&\times\left[a^{-(d-2)}\left(|\phi_\tau|^2 + |\nabla\phi|^2\right)+ \frac{\varepsilon a^\gamma}{m+1}|\phi|^{m+1}\right]\mathrm{d}x\nonumber\\
	& - C_4 a^{-\frac{4}{(m-1)}}(T_0+\tau)^k (1+\tau)^{-(1+\alpha)\left(\frac{m+1}{m-1}-\frac{(d-1)}{2}\right)},
	\end{align}
	for some constants $C_3,C_4>0$. Therefore, from \eqref{Eq.12}, we can infer
	\begin{align}\label{Eq.17}
	&\frac{\mathrm{d}}{\mathrm{d}\tau}\left[(T_0+\tau)^k\tilde{E}_\psi (\tau;\phi)\right]+C_3(T_0+\tau)^k\int_{\mathbb{R}^{d-1}}e^{2\psi}\left(1-\psi_\tau(1+\tau)^\alpha\right)\nonumber\\ 
	&\times\left[a^{-(d-2)}\left(|\phi_\tau|^2 + |\nabla\phi|^2\right)+ \frac{\varepsilon a^\gamma}{m+1}|\phi|^{m+1}\right]\mathrm{d}x\leq C_4 a^{-\frac{4}{(m-1)}}(T_0+\tau)^k (1+\tau)^{-(1+\alpha)\left(\frac{m+1}{m-1}-\frac{(d-1)}{2}\right)}.
	\end{align}
	Moreover, there exists a constant $C_5>1$ such that
	\begin{align}	\frac{1}{C_5}\frac{1}{T_0+\tau}
	\leq \frac{1}{1+\tau} \leq \frac{C_5}{T_0+\tau}.
	\end{align}
	We fix $0<\epsilon<1$ such that
	\begin{align}
	k-(1+\alpha)\left(\frac{m+1}{m-1}-\frac{(d-1)}{2}\right)=-1+\epsilon.
	\end{align}
	With this choice of $k$, integrating (\ref{Eq.17}) over the time interval $[0,\tau]$, yields
	\begin{align}
	&(1+\tau)^k\tilde{E}_\psi(\tau;\phi) + \int_{0}^{\tau}C_3(1+t)^k\int_{\mathbb{R}^{d-1}}e^{2\psi}\left(1-\psi_t(1+t)^\alpha\right)\nonumber\\ 
	&\times\left[a^{-(d-2)}\left(|\phi_t|^2 + |\nabla\phi|^2\right)+ \frac{\varepsilon a^\gamma}{m+1}|\phi|^{m+1}\right]\mathrm{d}x\mathrm{d}t \leq C_6 a^{-\frac{4}{(m-1)}}(1+\tau)^\epsilon,
	\end{align}
	for some constant $C_6$ independent of $\tau$. Consequently,
	\begin{align}
	\tilde{E}_\psi(\tau;\phi) \leq C_6a^{-\frac{4}{(m-1)}}(1+\tau)^{-(1+\alpha)\left(\frac{m+1}{m-1}-\frac{(d-1)}{2}\right)+1}.
	\end{align}
	Invoking \eqref{Ebar}, we further obtain
	\begin{align}
	\int_{\mathbb{R}^{d-1}}e^{2\psi}a^{-(d-2)}H_0(1+\tau)^{-\alpha}|\phi|^2\mathrm{d}x\leq C_6a^{-\frac{4}{(m-1)}}(1+\tau)^{-(1+\alpha)\left(\frac{m+1}{m-1}-\frac{(d-1)}{2}\right)+1},
	\end{align}
	which implies
	\begin{align}
	\int_{\mathbb{R}^{d-1}}e^{2\psi}|\phi|^2\mathrm{d}x\leq C_6 a^{-2\left(\frac{2}{m-1}-\frac{(d-2)}{2}\right)}(1+\tau)^{-(1+\alpha)\left(\frac{2}{m-1}-\frac{(d-1)}{2}\right)}.
	\end{align}
	Recalling the cosmological rescaling \eqref{phi}, we obtain
	\begin{align}
	\int_{\mathbb{R}^{d-1}}e^{2\psi}|\varphi|^2\mathrm{d}x\leq C_6 a^{-\frac{4}{m-1}}(1+\tau)^{-(1+\alpha)\left(\frac{2}{m-1}-\frac{(d-1)}{2}\right)}.
	\end{align}
	Consequently,
	\begin{align}
	\|\varphi\|_{L^2(\mathbb{R}^{d-1})}\leq Ca^{-\frac{2}{m-1}}(1+\tau)^{-(1+\alpha)\left(\frac{1}{m-1}-\frac{d-1}{4}\right)}.
	\end{align}
	To pass to the $L^1-$ norm, we invoke the weight \eqref{weight function} and the Cauchy-Schwarz inequality
	\begin{align}
	\|\varphi\|_{L^1(\mathbb{R}^{d-1})}=&\int_{\mathbb{R}^{d-1}} e^{\psi}|\varphi|\cdot e^{-\psi}\mathrm{d}x\leq \left(\int_{\mathbb{R}^{d-1}}e^{2\psi}|\varphi|^2\mathrm{d}x\right)^\frac{1}{2}\left(\int_{\mathbb{R}^{d-1}} e^{-2\psi}\mathrm{d}x\right)^\frac{1}{2},
	\end{align}
	which yields
	\begin{align}
	\|\varphi\|_{L^1(\mathbb{R}^{d-1})}\leq Ca^{-\frac{2}{m-1}}(1+\tau)^{-(1+\alpha)\left(\frac{1}{m-1}-\frac{d-1}{2}\right)}.
	\end{align}
	Hence, for $d\geq 4$ and nonlinear exponent $1<m<1+\frac{2}{d-1}$, both 
	$\|\varphi(\tau,\cdot)\|_{L^2(\mathbb{R}^{d-1})}$ and 
	$\|\varphi(\tau,\cdot)\|_{L^1(\mathbb{R}^{d-1})}$ 
	decay in time at rates governed by the cosmological redshift factor $a(\tau)^{-\frac{2}{m-1}}$ together with the polynomial decay induced by the time-dependent damping. In particular, accelerated expansion enforces a strong suppression of the nonlinear self-interaction in the sense of weighted $L^1$-$L^2$ decay estimates for spatially localized data. These estimates provide a rigorous formulation of redshift-induced decay estimates for nonlinear scalar field interactions, showing that inflationary expansion dynamically weakens nonlinear self-interactions at late times. This completes the proof.
\end{proof}

\subsection{Discussion of the Intermediate Regime}
Between the superconformal scattering regime $m>m_{\mathrm{conf}}$ and the diffusion-dominated range $1<m<1+\frac{2}{d-1}$,
there exists the intermediate interval $1+\frac{2}{d-1}<m<m_{\mathrm{conf}}.$ This regime is not covered by the results established in the present work and deserves a brief discussion.

First, the scattering mechanism of Theorem \ref{Thm:scattering} no longer applies. Indeed, for $m<m_{\mathrm{conf}}$, the exponent $\sigma=\frac{d+2-(d-2)m}{2}$ is positive, so that the effective coupling $g(\tau)=a(\tau)^\sigma$ grows with the cosmological expansion. Consequently, $g\notin L^1([0,\infty))$, and the Duhamel integral used in the proof of Theorem \ref{Thm:scattering} is no longer absolutely convergent. In this sense, the nonlinearity cannot be regarded as a short-range perturbation of the free wave equation.

Second, the weighted-energy argument developed in Theorem \ref{Thm:diffusion} also ceases to be effective. The key decay estimate relies on the positivity of the exponent $\frac{m+1}{m-1}-\frac{d-1}{2},$ which is equivalent to $m<1+\frac{2}{d-1}.$ For exponents in the present interval, this quantity becomes non-positive, and the resulting weighted-energy inequality no longer yields a decaying upper bound.

Therefore, the intermediate regime represents a transition region in which neither the redshift-induced scattering mechanism nor the diffusion-type decay mechanism dominates the asymptotic dynamics. The competition between cosmological damping and nonlinear amplification is more delicate in this range, and its detailed long-time behavior remains an interesting open problem. A refined analysis may require different techniques, such as Strichartz estimates adapted to expanding backgrounds or alternative weighted-energy constructions.

\section{Implications for Inflationary Stability and Nonlinear Dynamics}\label{Sec4}
The results obtained in this paper provide a physical interpretation of the role of inflation as a dynamic mechanism that not only dampens scalar field fluctuations but also effectively weakens its nonlinear interaction law. The effective coupling $g(\tau)=a(\tau)^{\sigma}$ in the rescaled equation \eqref{Eq:rescaled} shows that redshift suppression is governed by the conformal threshold $m_{\mathrm{conf}}$: for superconformal interactions $m>m_{\mathrm{conf}}$ one has $\sigma<0$ and the coupling decays (indeed $g\in L^1([0,\infty))$ when $0\le\alpha<1$), leading to asymptotic linearization and scattering. At the conformal power $m=m_{\mathrm{conf}}$ the coupling is constant, while for $1<m<m_{\mathrm{conf}}$ the coupling grows, so redshift alone does not suppress nonlinear effects in that regime. From a cosmological point of view, this means that the inflationary phase acts as a natural regulator for field dynamics, stabilizing the evolution against nonlinear instabilities and preventing the formation or persistence of localized nonlinear structures such as lumps or bound states during the rapid expansion phase. A key aspect of the cosmic no-hair principle is that inflationary expansion suppresses anisotropies and inhomogeneities, driving spacetime toward a homogeneous de Sitter-like configuration, as first rigorously shown for homogeneous cosmologies with a positive cosmological constant \cite{Kaloper,Maleknejad} and extended to power-law inflationary settings \cite{Kitada}. Thus, the cosmic no-hair principle acquires a stronger meaning at the field theory level, not only is the geometry of spacetime forced towards a homogeneous and isotropic form, but the dynamics of the scalar field are also driven towards a regime increasingly free from the influence of nonlinear interactions. This finding is relevant for understanding inflationary stability, the dynamics of the inflaton and additional scalar fields, and the validity of effective field descriptions in the early universe, where cosmological expansion actively suppresses nonlinear effects and ensures the long-term controllability of field evolution.

\section*{Acknowledgments}
This research is funded by the Indonesian Endowment Fund for Education (LPDP) on behalf of the Indonesian Ministry of Higher Education, Science and Technology and is managed under the EQUITY Program (Contract No.~4298/B3/DT.03.08/2025). The work of MPW is also partly supported by BLU Research Grant from Universitas Jenderal Soedirman under the RDU (Riset Dasar Unsoed) scheme for the year 2026. Grant No. 13.391/UN23.34/PT.01.00/IV/2026.

\section*{Statements and Declarations}

\paragraph{Funding.} This research is funded by the Indonesian Endowment Fund for Education (LPDP) on behalf of the Indonesian Ministry of Higher Education, Science and Technology and is managed under the EQUITY Program (Contract No.~4298/B3/DT.03.08/2025). The work of MPW is also partly supported by BLU Research Grant from Universitas Jenderal Soedirman under the RDU (Riset Dasar Unsoed) scheme for the year 2026. Grant No. 13.391/UN23.34/PT.01.00/IV/2026.

\paragraph{Competing interests.} The authors declare that they have no competing interests.

\paragraph{Data availability.} No data were generated or analyzed in this study.

\paragraph{Code availability.} Not applicable.


\begin{thebibliography}{00}
	\bibitem{Guth}
	Guth, A.H.: Inflationary universe: A possible solution to the horizon and flatness problems. \emph{Phys. Rev. D.}, \textbf{23} (2), 347-356 (1981).
	
	\bibitem{Mukhanov}
	Mukhanov, V.F., Feldman, H.A., Brandenberger, R.H.: Theory of cosmological perturbations. \emph{Phys. Rep.}, \textbf{215} (5-6), 203-333 (1992).
	
	\bibitem{Frusciante}
	Frusciante, N., Perenon, L.:
	Effective field theory of dark energy: A review. \emph{Phys. Rep.}, \textbf{857} (3), 1-63 (2020).
	
	\bibitem{Weinberg}
	Weinberg, S.: Quantum contributions to cosmological correlations. \emph{Phys. Rev. D.}, \textbf{72} (4), 043514 (2005).
	
	\bibitem{Bartolo}
	Bartolo, N., Komatsu, E., Matarrese, S., Riotto, A.:
	Non-Gaussianity from inflation: theory and observations. \emph{Phys. Rep.}, \textbf{402} (3-4), 103-266 (2004).
	
	\bibitem{Chen}
	Chen, X.: Primordial Non-Gaussianities from Inflation Models. \emph{Adv. Astron.}, 638979 (2010).
	
	\bibitem{Wald}
	Wald, R.M.: Asymptotic behavior of homogeneous cosmological models in the presence of a positive cosmological constant. \emph{Phys. Rev. D.}, \textbf{28} (8), 2118-2120 (1983).
	
	\bibitem{Ringstrom}
	Ringstr\"om, H.: Future stability of the Einstein-non-linear scalar field system. \emph{Invent. Math.}, \textbf{173}, 123-208 (2008).
	
	\bibitem{Todorova}
	Todorova, G., Yordanov, B.:
	Critical Exponent for a Nonlinear Wave Equation with Damping. \emph{J. Differ. Equ.}, \textbf{174} (2), 464-489 (2001).
	
	\bibitem{Nishihara}
	Nishihara, K., Zhai, J.:
	Asymptotic behaviors of solutions for time dependent damped wave equations. \emph{J. Math. Anal. Appl.}, \textbf{360} (2), 412-421 (2009).
	
	\bibitem{Kitada}
	Kitada, Y., Maeda, K.:
	Cosmic no-hair theorem in power-law inflation. \emph{Phys. Rev. D.}, \textbf{45} 1416 (1992).
	
	\bibitem{Bruni}
	Bruni, M., Mena, F.C., Tavakol, R.:
	Cosmic no-hair: nonlinear asymptotic stability of de Sitter universe. \emph{Class. Quantum Grav.}, \textbf{19} L23 (2002).
	
	\bibitem{Kaloper}
	Kaloper, N., Scargill, J.:
	Quantum cosmic no-hair theorem and inflation. \emph{Phys. Rev. D.}, \textbf{99} 103514 (2019).

    \bibitem{Hintz}
	Hintz, N., Vasy, A.:
	Semilinear wave equations on asymptotically de Sitter, Kerr-de Sitter and Minkowski spacetimes. \emph{Anal. PDE}, \textbf{8} (8) (2015).

    \bibitem{Baskin}
	Baskin, D.: Strichartz Estimates on Asymptotically de Sitter Spaces. \emph{Ann. Henri Poincaré}, \textbf{14}, 221-252 (2013).

    \bibitem{Nakamura}
	Nakamura, M.: The Cauchy problem for semi-linear Klein–Gordon equations in de Sitter spacetime. \emph{J. Math. Anal. Appl.}, \textbf{410} (1), 445-454 (2014).

    \bibitem{Yagdjian}
	Yagdjian, K.: The semilinear Klein-Gordon equation in de Sitter spacetime. \emph{Discrete Contin. Dyn. Syst.-S.}, \textbf{2} (3), 679-696 (2009).

    \bibitem{Palmieri}
	Palmieri, A., Takamura, H.: A note on blow-up results for semilinear wave equations in de Sitter and anti-de Sitter spacetimes. \emph{J. Math. Anal. Appl.}, \textbf{514} (1), 126266 (2022).
	
	\bibitem{fiki}
	Akbar, F.T., Gunara, B.E., Iqbal, M., and Susanto, H.:
	Local and global existence of solutions to scalar equations on
	spatially flat universe as a background with non-minimal coupling. \emph{Adv. Theor. Math. Phys.}, \textbf{23} (2018).
	
	\bibitem{Radu}
	Radu, P., Todorova, G., Yordanov, B.:
	Higher order energy decay rates for damped wave equations with variable coefficients. \emph{Discrete Contin. Dyn. Syst.}, \textbf{2} (3), 609-629 (2009).
	
	\bibitem{LuLi}
	Lu, L., Li, S.:
	Higher order energy decay for damped wave equations with variable coefficients. \emph{J. Math. Anal. Appl.}, \textbf{418} (1), 64-78 (2014).
	
	\bibitem{Sobajima}
	Sobajima, M., Wakasugi, Y.:
	Remarks on an elliptic problem arising in weighted energy estimates for wave equations with space-dependent damping term in an exterior domain. \emph{AIMS Math.}, \textbf{2} (1), 1-15 (2016).
	
	\bibitem{Maleknejad}
	Maleknejad, A., Sheikh-Jabbari, M.M.:
	Revisiting cosmic no-hair theorem for inflationary settings. \emph{Phys. Rev. D.}, \textbf{85} (12), 123508 (2012).
	
\end{thebibliography}
\end{document}